\newtheorem{Theorem}{Theorem} 
\newtheorem{Proposition}[Theorem]{Proposition}
\newtheorem{Lemma}[Theorem]{Lemma}
\newtheorem{Corollary}[Theorem]{Corollary}
\newtheorem{Remark}[Theorem]{Remark}
\newtheorem{Example}[Theorem]{Example}
\newtheorem{Hypothesis}{Hypothesis}
\def\R{\mathbb R}
\def\N{\mathbb N}
\def\E{\mathbb E}
\def\P{\mathbb P}
\def\Q{\mathbb Q}
\def\ds{\displaystyle}
\newcommand{\one}{1\!\!\!\;\mathrm{l}}
\title{\bf  Cauchy--Dirichlet problems for  a class of hypoelliptic equation in $\R^d$: a new     probabilistic representation formula for the gradient of the solutions}
\newlength{\tempa}\setlength{\tempa}{6.27in}
\newlength{\tempb}\setlength{\tempb}{9.7in} 
\author{Giuseppe Da Prato\\
\small Scuola Normale Superiore, Pisa, Italy
\and  
Luciano Tubaro\\
\small University of Trento, Trento, Italy
}
\begin{document}
\maketitle
\begin{abstract}  
We are concerned with     an Ornstein--Uhlenbeck process   $X(t,x)=e^{tA}x +\int_0^t e^{(t-s)A}\sqrt C\,dW(s)$ in $\R^d$,  $d\ge 1$, where $A$ and $C$ are $d\times d$ matrices,   $C$ being
 semidefinite positive.  Our basic assumption is that the matrix $Q_t=\int_0^te^{sA}Ce^{sA^*}ds$ is non singular for all $t>0$;  this implies that  the corresponding Kolmogorov operator is hypoellyptic. Then we consider  the stopped semigroup $ R^{\mathcal {O}_r}_T\varphi(x)=\E\left[\varphi(X(T,x))\one_{T\le\tau^r_x}\right],\; T\ge 0$ where $\mathcal O_r=\{g<r\}$ is bounded, $g$ is convex, and
  $\tau^r_x= \inf\{t>0:\;X(t,x)\in \overline{\mathcal O_r}^{\,c}\}$.
  We prove the existence  and a new representation formula for the gradient   of $R^{\mathcal {O}_r}_T\varphi$,   where $T>0$ and  $\varphi$ is    bounded and Borel.
  
 
\end{abstract}
\bigskip

\noindent {\bf 2000 Mathematics Subject Classification AMS}:  35J15,  60G53, 60H99, 60J65.

\noindent {\bf Key words:  hypoelliptic operators, strong Feller property, Cauchy--Dirichlet problem, Cameron--Martin formula, Brownian motion}. \bigskip

\tableofcontents

 \section{Introduction and setting of the problem}
 
 We are  here concerned with  an Ornstein--Uhlenbeck process in   $\R^d=H,\; d\in\N$,
 \begin{equation}
 \label{e1}
 X(t,x)=e^{tA}x+\int_0^te^{(t-s)A}\sqrt C\;dW(s),\quad x\in H,\; t\ge 0,
 \end{equation}
 where  $A$ and $C$ are $d\times d$ matrices, $C$  being symmetric and semi-definite positive. Moreover $W(t),\;t\ge 0$, represents an $H$--valued standard   Wiener process   defined on a   probability space $(\Omega, \mathcal F, \P)$. We denote by  
$W_A$   the {\em stochastic convolution}
 \begin{equation}
 \label{e2}
  W_A(t):=\int_0^te^{(t-s)A}\sqrt C\;dW(s),\quad  t\ge 0.
 \end{equation}
 Our   basic assumption is the following.
 \begin{Hypothesis}
 \label{h1}
 The matrix  $ Q_t:=\int_0^te^{sA}Ce^{sA^*}ds$ is non singular for all  $t>0.$  
   \end{Hypothesis}
   \begin{Remark}
   \em Hypothesis 1 arises in controllability problems
   for the deterministic    system $D_t\xi=A\xi +\sqrt C\,u$, where
   $u$ is a control. See e.g. \cite{Za92}.

   \end{Remark}

As well known, the transition semigroup $R_t,\,t\ge 0,$ corresponding to the process $X(t,x)$ is given by 
 \begin{equation}
 \label{e5a}
 R_t\varphi(x)=\int_H\varphi(y) N_{e^{tA}x,Q_t}(dy) ,\quad  t\ge 0,\;x\in H,\;\varphi\in B_b(H),
 \end{equation}
 where  
 $ N_{e^{tA}x,Q_t}$ is the gaussian  probability measure on $H$ of mean $e^{tA}x$ and covariance  $Q_t$ and  $B_b(H)$   denotes the space of all mappings $H\to H$ which  are bounded and Borel.
By Hypothesis 1  the matrix $\Lambda_t:=Q_t^{-1/2}e^{tA}$ is  non singular for all $t>0$; consequently, by the Cameron--Martin Theorem it follows that 
 $ N_{e^{tA}x,Q_t}<<  N_{Q_t}$  and
\begin{equation}
 \label{e7}
\frac{d N_{e^{tA}x,Q_t}}{d N_{Q_t}}(y)=e^{-\frac12| \Lambda_t x|_H^2+\langle \Lambda_t x,Q_t^{-1/2}y  \rangle_H},\quad t>0,\, y\in H.
 \end{equation}
 Therefore      a  well known representation formula for $R_t$  follows by \eqref{e5a},
  \begin{equation}
 \label{e8}
 R_t\varphi(x)=\int_H\varphi(y)e^{-\frac12| \Lambda_t x|_H^2+\langle \Lambda_t x,Q_t^{-1/2}y  \rangle_H} N_{Q_t}(dy),\quad  t\ge 0,\;x\in H,\;\varphi\in B_b(H).
 \end{equation}
By \eqref{e8} we can deduce that  $R_t\varphi$ is differentiable infinitely many times, in particular it is strongly Feller.\bigskip 

The goal of this paper is to generalise the above regularity results
to the stopped semigroup
 $R^{\mathcal {O}_r}_T,\;T\ge 0,$  defined by 
 \begin{equation}
 \label{e10}
\ds R^{\mathcal {O}_r}_T\varphi(x)=\E\left[\varphi(X(T,x))\one_{T\le\tau_x}\right],\quad T\ge 0,\; 
\varphi\in B_b(\overline{\mathcal O_r}),
 \end{equation}
 where $\mathcal {O}_r$ is an open convex  bounded subset of $H$
and  $\tau_x$ is the exit time from  $\overline{\mathcal O_r}$.\medskip

 More precisely, we shall assume    
       \begin{Hypothesis}
  \label{h2}
  (i)  $g:H\to \R$ is a convex function  of class $C^1$ such that $g(0)=0$,  $g(x)>0$ and $g'(x)\neq 0$ for all $x\neq 0$. For any $r>0$ we set
  $\mathcal O_r=\{g<r\}$, $\overline{\mathcal O_r}=\{g\le r\}$ and  $\partial\mathcal O_r=\{g^{-1}(r)\}$.  Moreover,    $\mathcal O_r$ is bounded. \medskip
  
  \noindent (ii) There exist  $a,\,b>0$ such that $|g(x)|
  +|g'(x)|_H\le a+e^{b|x|_{H}}$for all $x\in H$.\medskip

   \end{Hypothesis}
      The semigroup $R^{\mathcal {O}_r}_T,\;T\ge 0,$  is related, as well known,  to  the Dirichlet problem in $\overline{\mathcal O_r}$ for the Kolmogorov operator,
 \begin{equation}
\label{e10d}
 \mathcal K \varphi:=\frac12\,\mbox{\rm Tr}\,[CD^2\varphi]+\langle Ax,D\varphi\rangle.
\end{equation} 
This problem is elliptic when the matrix $C$ is non singular, otherwise  is hypoelliptic. In the last case the existence of the gradient of  $R^{\mathcal {O}_r}_T\varphi,\;T\ge 0,$  when $\varphi$ is namely bounded and Borel, is more challenging.

 Here  is a simple example where Hypothesis \ref{h1} is fulfilled.
\begin{Example}
\label{ex1}
\em 
 Let $d=2$ and   
$
A=\left(
\begin{array}{cc}
0&0\\
1&0
\end{array} \right),\;
C=\left(
\begin{array}{cc}
1&0\\
0&0
\end{array} \right).
$
Then we have 
\begin{displaymath}
Q_t=\int_0^te^{sA}Ce^{sA^*}ds=\int_0^t\left(
\begin{array}{cc}
1&s\\
s&s^2
\end{array} \right)ds=
\left(\begin{array}{cc}
t&t^2/2\\
t^2/2&t^3/3
\end{array}\right).
\end{displaymath}
Therefore det $Q_t>0$ for any $t>0$. If $f\in B_b(H)$ and $t>0$, we   conclude that $R_tf$ is of class $C^\infty$. Note that $u(t,\xi)=R_tf(\xi),$   is the solution  of the well known Kolmogorov equation
\begin{equation}
\label{e9}
\left\{\begin{array}{l}
\ds D_tu(t,\xi_1,\xi_2)=\tfrac{1}{2}\,D^2_{\xi_1}u(t,\xi_1,\xi_2)+\xi_1D_{\xi_2}u(t,\xi_1,\xi_2) \\
\\
u(0,\xi)=f(\xi),\quad \xi=(\xi_1,\xi_2)\in \R^2,
\end{array}\right.
\end{equation}
which is {\em hypoelliptic}. 
\end{Example}

\medskip

 Let us explain our result. We start  from  an obvious consequence of  \eqref{e10},
    \begin{equation}
 \label{e9h}
 R^{\mathcal {O}_r}_T\varphi(x) =\int_{\{g(e^{sA}x+h(s))\le r,\,\forall\;s\in[0,T]\}}\varphi(h(T)+e^{TA}x) N_{\mathbb Q_T}(dh),\quad \varphi\in B_b(\overline{\mathcal O_r}),
 \end{equation}
 where   $ N_{\Q_T}$ is the law of $W_A(\cdot)$  in $X:=L^2(0,T;H)$ or in $E:=C([0,T];H)$,  see Lemma \ref{l1} below.

  Let $x\in H$; then we   cannot eliminate $x $  in identity \eqref{e9h}  making the translation $h\to h-e^{\cdot A}x$
 and  using the Cameron--Martin formula
because  the  measures  $N_{ e^{\cdot A}x,\Q}$ and $N_{\Q}$  
are singular. 
For this reason we look for another translation   $h\to h-a(x,\cdot)$   such that $a(x,\cdot)$ belongs to  $\Q_T(X)$ for all $x\in H$   (and a--fortiori to 
 $\Q^{1/2}_T(X)$,  the Cameron--Martin space of $ N_{\mathbb Q_T}$)  and {\em such that}:  
  \begin{equation}
 \label{e10h}
 a(x,T)=e^{TA}x,\quad \forall\,x\in H
 \end{equation}
 (see Proposition \ref{p5c} below).
Then the measures  $N_{a(x,\cdot),\mathbb Q_T}$ and $N_{\mathbb Q_T}$  are equivalent, so that   by the Cameron--Martin Theorem we have
\begin{equation}
 \label{e11h}
 \frac{dN_{a(x,\cdot),\mathbb Q_T}}{dN_{\mathbb Q_T}}(h)=\exp\left\{-\tfrac12|\Q_T^{-1/2}a(x,\cdot)|_X^2+W_{\Q_T^{-1/2}a(x,\cdot)}(h)\right\},\quad x\in H,\;h\in X,$$
  \end{equation} 
  where $\Q_T^{-1/2}$ is the pseudo inverse of $\Q_T^{1/2}$, see e.g. \cite[Theorem 2.23]{DaZa14}.
  Now we take advantage of the special form of $a(x,\cdot)$ for simplifying identity  \eqref{e9h}. We write
  \begin{equation}
 \label{e12h}
  \ds |\Q_T^{-1/2}a(x,\cdot)|_X^2=\langle \Q_T^{-1}a(x,\cdot) ,a(x,\cdot)  \rangle_X=:F(x),
  \end{equation} 
  and,
 \begin{equation}
 \label{e13h}
  W_{\Q_T^{-1/2}a(x,\cdot)}(h)=\langle \Q_T^{-1/2}a(x,\cdot) ,\Q_T^{-1/2}h  \rangle_X=
  \langle \Q_T^{-1}a(x,\cdot) ,h  \rangle_X=
 :G(x,h).
   \end{equation} 
Note that  both $F$ and $G$ are regular.
Now \eqref{e11h} becomes  
   \begin{equation}
 \label{e14h}
 \frac{dN_{a(x,\cdot),\mathbb Q_T}}{dN_{\mathbb Q_T}}(h)=\exp\left\{-\tfrac12\,F(x)+G(x,h)\right\},\quad x\in H,\;h\in X,
 \end{equation} 
 and  \eqref{e9h}   
  \begin{equation}
 \label{e2.8}
R^{\mathcal O_r}_T\varphi(x)=\int_{\{\Gamma(h+ d(x,\cdot) \le r\}} \varphi(h(T)) \exp\left\{-\tfrac12\,F(x)+G(x,h)\right\}N_{\mathbb Q_T}(dh),\quad \varphi\in B(\overline{\mathcal O_r}),
 \end{equation}
 where
  \begin{equation}
 \label{e16h}
 \Gamma(h+d(x,\cdot))=\sup_{t\in[0,T]}\,g(h(t)+d(x,t)),\quad k\in E,\quad d(x,t)=e^{tA}x-a(x,t),\quad t\in[0,T].
 \end{equation}
 In  the integral \eqref{e2.8} the variable $x$ does not appear under  the argument of $\varphi$.  Since the mapping $x\to
 \Gamma(h+d(x,\cdot))$ is continuous this fact   implies that the semigroup $R^{\mathcal O_r}_T,\,T>0$, is {\em strong Feller}   that is $\varphi\in B_b(H)\Rightarrow R^{\mathcal O_r}_T\varphi\in C_b(H)$ for all $T>0$, see Proposition \ref{p1} below. 
 
   More difficult is  to show that   $R^{\mathcal O_r}_T\varphi$ is differentiable  for all $T>0$. As it is expected,  this will produce a surface integral which, unfortunately,  is not covered by the classical assumptions from Airault--Malliavin, \cite{AiMa88} see also \cite{DaLuTu14}.  To overcome this difficulty  we introduce in Section 3 an approximating semigroup $R^{\mathcal O_r}_{T,n},\,T>0$, for all  decomposition $\{t_j=\tfrac{jT}{2^n},\,j=0,1,\ldots,2^n\}$  of $[0,T]$,   namely   by approximating any function $h$ from $E$ by step functions.
Then we arrive to an identity for   $R^{\mathcal O_r}_{T,n}\varphi(x)$ (see \eqref{e35a})  that  can be  easily  differentiated with respect to $x$, see identity \eqref{e38a}.
 It remains to let  $n\to\infty$;     this is not  easy  due to the factor
 $$
 \langle \Q_{T,n}^{-1/2} (d_x(x,\cdot )y), \Q_{T,n}^{-1/2}\,h\rangle_{H^{2^n}}$$ which appears in the identity \eqref{e38a} because $d_x(x,\cdot )y$  does not belong
 to the Cameron--Martin space of  $N_{ \Q_{T}}$.
   Some  additional  work is required, based on the Ehrhard inequality for the gaussian measure $N_{\Q_T}$ and   the selection principle of Helly. After some  manipulations,   we
   arrive at the representation formula \eqref{e100} which is the main result of the paper. 
   Our procedure was  partially inspired by  a  paper by Linde \cite{Li86},  which was dealing, however, with a completely different situation.
   
We believe that our method could be extended to   more general Kolmogorov operators of the form
 \begin{equation}
\label{e10dd}
 \mathcal K_1 \varphi=\frac12\,\mbox{\rm Tr}\,[CD^2\varphi]+\langle Ax+b(x),D\varphi\rangle,\quad \varphi\in C^2(\overline{\mathcal O_r}).
\end{equation}  
where $b:H\to H$ is  suitable nonlinear mapping. This will be the object of a future work.


\bigskip

We end this section with some notation.
  For any $T>0$ we consider  the law of $X(\cdot ,x)$ both  in the Banach space $E=C([0,T];H)$ and in the Hilbert space $X=L^2(0,T;H)$ (in the second case it is concentrated on $E$ which is a Borel subset of $X$). 
We shall denote by $|\cdot|_X$ (resp. $|\cdot|_E$)   the norm of $X$ (resp. of $E$). The scalar product  from two elements $x,y\in H$ (resp. $X$)  will be denoted 
   either by $\langle x, y \rangle_H$ (resp. $\langle x, y \rangle_X$)  or  by $x\cdot y$. If $\varphi\in C^1_b(E)$ and $\eta\in E$ we denote by $D\varphi(h)\cdot \eta$ the derivative of $\varphi$ at $h$ in the direction $\eta$.

   In what follows several integrals with respect to $dN_\Q$ will be considered, according to the convenience, both in $X$ and in $E$.  

\section{Strong Feller property of $R^{\mathcal O_r}_{T},\,T>0$}

We first recall some properties of the gaussian measure $\Q_T$.
The following lemma is well known, see e.g \cite[Theorem 5.2]{DaZa14}.
\begin{Lemma}
\label{l1}
The  law of $W_A(\cdot)$   is gaussian $ N_{\mathbb Q_T}$ both in $E$ and in $X$,  where $\Q_T$  is given by
\begin{equation}
(\mathbb Q_T \,h)(t)=\int_{0}^{T} K(t,s)h(s)\, ds, \quad t \in [0,T],\;h\in X
\label{e13}
\end{equation}
where
\begin{equation}
\label{e14}
K(t,s)=\left\{\begin{array}{l}
\ds\int_{0}^se^{(t-r)A}Ce^{(s-
r)A^*}dr \quad\mbox{\rm if}\;0\le s\le t\le T\\
\\
\ds\int_{0}^{t}e^{(t-r)A}Ce^{(s-
r)A^*}dr\quad\mbox{\rm if}\;0\le t\le s\le T.
\end{array}\right.
\end{equation}
\end{Lemma}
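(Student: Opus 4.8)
The plan is to verify that $W_A(\cdot)$ is a centered Gaussian random variable with values in $E$ — hence, via the continuous embedding $E\hookrightarrow X$, also with values in $X$ — and then to identify its covariance operator on $X$ with the integral operator $\mathbb Q_T$ of \eqref{e13}--\eqref{e14}.

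First I would record that $W_A$ has a version with continuous paths: $t\mapsto W_A(t)=X(t,0)$ is (a version of) the strong solution of $dX=AX\,dt+\sqrt C\,dW$, $X(0)=0$, whose trajectories are a.s.\ continuous; alternatively, Kolmogorov's continuity criterion applies thanks to the elementary estimate
\[
\E\,|W_A(t)-W_A(\sigma)|_H^{2m}\le c_m\,|t-\sigma|^{m},\qquad 0\le\sigma\le t\le T,\ m\ge2,
\]
which follows from It\^o's isometry (and the equivalence of Gaussian moments) together with the local Lipschitz continuity of $s\mapsto e^{sA}$. Thus $W_A(\cdot,\omega)\in E$ for a.e.\ $\omega$, so $W_A$ is a Borel random variable in $E$ and, via $E\hookrightarrow X$, also in $X$.

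Next, Gaussianity: for any $0\le t_1<\dots<t_n\le T$ and $a_1,\dots,a_n\in H$,
\[
\sum_{j=1}^n\langle W_A(t_j),a_j\rangle_H=\int_0^T\langle \Phi(s),dW(s)\rangle_H,\qquad
\Phi(s):=\sum_{j=1}^n\one_{[0,t_j]}(s)\,\sqrt C\,e^{(t_j-s)A^*}a_j ,
\]
a Wiener integral of the deterministic kernel $\Phi\in L^2(0,T;H)$, hence a real centered Gaussian variable. So the finite-dimensional distributions of $W_A$ are centered Gaussian; consequently every continuous linear functional of $W_A$ on $E$ is centered Gaussian (being an $L^2(\Omega)$-limit of finite linear combinations of the $\langle W_A(t_j),\cdot\rangle_H$), so the law of $W_A$ on $E$ is a centered Gaussian measure, and its push-forward on $X$ — we denote both by $N_{\mathbb Q_T}$ — is a centered Gaussian measure as well.

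Finally, the covariance. By It\^o's isometry, for $a,b\in H$ and $0\le\sigma\le t\le T$,
\[
\E\big[\langle W_A(t),a\rangle_H\,\langle W_A(\sigma),b\rangle_H\big]
=\int_0^{t\wedge\sigma}\big\langle \sqrt C\,e^{(t-r)A^*}a,\ \sqrt C\,e^{(\sigma-r)A^*}b\big\rangle_H\,dr ,
\]
which, using $(\sqrt C)^{*}=\sqrt C$ and $C^{*}=C$, equals $\langle K(\sigma,t)a,b\rangle_H$, where $K$ is the kernel \eqref{e14} (note $K(t,s)^{*}=K(s,t)$, directly from \eqref{e14}). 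Hence, by Fubini's theorem — legitimate because $(t,\sigma,\omega)\mapsto\langle W_A(t),h(t)\rangle_H\langle W_A(\sigma),k(\sigma)\rangle_H$ is integrable, again by It\^o's isometry and Cauchy--Schwarz — one gets, for all $h,k\in X$,
\[
\E\big[\langle W_A,h\rangle_X\,\langle W_A,k\rangle_X\big]
=\int_0^T\!\!\int_0^T\langle K(\sigma,t)h(t),k(\sigma)\rangle_H\,dt\,d\sigma
=\langle \mathbb Q_T h,k\rangle_X ,
\]
the last equality by \eqref{e13}. Thus the covariance operator of $N_{\mathbb Q_T}$ on $X$ is exactly the operator $\mathbb Q_T$ of \eqref{e13}; its self-adjointness and positivity follow from the identity above together with $K(t,s)^{*}=K(s,t)$. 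I do not expect any serious obstacle here: the only mildly delicate points are the path-continuity of $W_A$ and the Fubini interchange, and both are entirely routine in this finite-dimensional Gaussian setting — which is exactly why the lemma is quoted as well known.
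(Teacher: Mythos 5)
Your proof is correct, but note that the paper does not actually prove this lemma: it is quoted as well known with a reference to \cite[Theorem 5.2]{DaZa14}, so your self-contained verification is a genuine alternative to citation rather than a rederivation of an argument in the text. Your route is the standard one and all the key steps check out: the Kolmogorov/martingale argument for a continuous version (even more cheaply, $W_A(t)=e^{tA}\int_0^t e^{-sA}\sqrt C\,dW(s)$ with the inner integral a continuous martingale), Gaussianity of the finite-dimensional distributions via Wiener integrals of deterministic kernels, and the It\^o-isometry computation giving $\E[\langle W_A(t),a\rangle_H\langle W_A(\sigma),b\rangle_H]=\langle K(\sigma,t)a,b\rangle_H$, which after Fubini identifies the covariance on $X$ with the operator $\mathbb Q_T$ of \eqref{e13}--\eqref{e14}. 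The only step I would state more carefully is the passage from finite-dimensional distributions to Gaussianity of the law on $E$: you should invoke the Riesz representation of $\big(C([0,T];H)\big)^*$ by $H$-valued measures of bounded variation and note that for each such measure the Riemann--Stieltjes sums $\sum_j\langle W_A(t_j),\nu((t_{j-1},t_j])\rangle_H$ converge pathwise (by uniform continuity of the trajectories and dominated convergence), so that $\ell(W_A)$ is an a.s.\ (hence in-distribution) limit of centered Gaussians and therefore centered Gaussian; alternatively, for the law on $X$ one can bypass $E^*$ entirely by writing $\langle W_A,k\rangle_X$ as a Wiener integral via the stochastic Fubini theorem. With that point made explicit, your argument is a complete and elementary substitute for the citation, at the modest cost of a page of routine computation.
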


We note that there exists    an orthonormal basis $(e_j)$ on $X$ and a sequence $(\lambda_j)$ of nonnegative numbers such that
$$
\Q_Te_j=\lambda_je_j,\quad \,j\in\N,
$$
and an integer $k_0\ge 0$ such that
$$
\lambda_1=\lambda_2=\cdots =\lambda_{k_0} =0,\quad 
\lambda_j>0,\quad\forall\,j>k_0.
$$
If  $k_0=0$ then $\Q_T $ is non degenerate.

 We  shall denote by   $ \mathbb  L_T$   the linear operator  from $X$ into itself defined by 
 \begin{equation}
 \label{e15}
\mathbb  L_Th(t)= \int_0^t e^{(t-s)A}\sqrt{C}\,h(s)ds,\quad h\in X,\;t\in[0,T].
 \end{equation}
 Its adjoint   $ \mathbb  L_T^{*}$  is given by
 \begin{equation}
 \label{e16}
\mathbb L_T^{*}g(t)=\int_t^T\sqrt Ce^{(s-t)A^*} g(s)ds,\quad g \in X,\;t\in[0,T].
 \end{equation}
 It is easily checked that
  $ \Q_T=\mathbb L_T\,\mathbb L_T^*$. Moreover,
 by \cite[Corollary B5]{DaZa14}, the Cameron--Martin space of the Gaussian measure $ N_{\mathbb Q_T}$
   is given by
 \begin{equation}
 \label{e18}
 \Q_T^{1/2}(X)=\mathbb L_T(X),
 \end{equation}
  both in $E$ and in $X$. 
  \begin{Remark}
  \label{r}
  \em If det $C=0$ one checks easily that  the gaussian measure $ \Q_T$ is degenerate and
  $$
  \mbox{\rm Ker} \,Q=\{h\in X:\,\mathbb L_T^*h=0\}.
  $$
  For instance, coming back to Example \ref{ex1}  we see that in that case
  $$
\mbox{\rm Ker}\;Q=\{h=(h_1,\,h_2)\in E:\, h'_1(t)=h_2(t)
\}.
$$
  \end{Remark}
  We shall denote
 in what follows  by $\mathbb Q_T^{-1}$ (resp.  $\mathbb Q_T^{-1/2}$) the pseudo--inverse of $\mathbb Q_T$ (resp.    the pseudo--inverse of $\mathbb Q_T^{1/2}$) \footnote{Let  $S:X\to Y$ be a linear, bounded and compact operator; the pseudo--inverse $S^{-1}$ of $S$ is defined as follows. For any $y\in S(X)$  we denote by $S^{-1}y$ the element of minimal norm from the convex set $\{x\in X:\,S(x)=y\}$.}.
 Clearly,  the domain of  $\mathbb Q_T^{-1}$ is equal to $\Q_T(X)$ and 
 $h\in  \mathbb Q_T^{-1}(X)$ if and only if the following series is convergent in $X$
 $$
 \mathbb Q_T^{-1}h=\sum_{j=k_0+1}^\infty \lambda^{-1}_j\langle h,e_j\rangle_X\,e_j.
 $$
 Similar assertion holds for $\Q_T^{-1/2}$.\medskip

Now to introduce   the required translation;
 first we need a lemma. 
\begin{Lemma}
\label{l5b}
Define $U:=  \int_{0}^Tr e^{rA}Ce^{rA^*}dr.$
 Then $\det U>0$.
 \end{Lemma}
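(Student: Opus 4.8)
The plan is to show that the symmetric matrix $U=\int_0^T r\,e^{rA}Ce^{rA^*}\,dr$ is strictly positive definite by checking that it has trivial kernel; positivity is then automatic since the integrand $r\,e^{rA}Ce^{rA^*}$ is symmetric and positive semi-definite on $[0,T]$ (as $r\ge 0$ and $C\ge 0$). So suppose $\xi\in\R^d$ satisfies $\langle U\xi,\xi\rangle=0$. Writing this out,
\begin{equation}
\label{eq:Uzero}
\int_0^T r\,\bigl|\sqrt{C}\,e^{rA^*}\xi\bigr|_H^2\,dr=0 .
\end{equation}
Since the integrand in \eqref{eq:Uzero} is continuous and nonnegative, it vanishes identically on $[0,T]$, hence $\sqrt{C}\,e^{rA^*}\xi=0$ for every $r\in(0,T]$, and by continuity for $r=0$ as well.

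From $\sqrt{C}\,e^{rA^*}\xi\equiv 0$ on $[0,T]$ I would then derive that $\xi$ lies in the kernel of $Q_T=\int_0^T e^{rA}Ce^{rA^*}\,dr$: indeed, the same computation gives $\langle Q_T\xi,\xi\rangle=\int_0^T|\sqrt{C}\,e^{rA^*}\xi|_H^2\,dr=0$. By Hypothesis \ref{h1}, $Q_T$ is non singular for every $T>0$, so $\xi=0$. This shows $\ker U=\{0\}$, and combined with $U\ge 0$ we conclude $U>0$, i.e. $\det U>0$.

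The argument is essentially routine; the only point requiring a little care is the passage from \eqref{eq:Uzero} to the pointwise vanishing of $\sqrt{C}\,e^{rA^*}\xi$, which uses the continuity of the map $r\mapsto |\sqrt{C}\,e^{rA^*}\xi|_H^2$ together with the strict positivity of the weight $r$ on $(0,T]$ — one must note that the weight degenerates at $r=0$, but this causes no problem since the integrand is already forced to vanish on the open interval $(0,T]$ and then extends by continuity. Alternatively, one can observe directly that $\int_0^T r\,e^{rA}Ce^{rA^*}\,dr\ge \frac{\varepsilon}{?}\,$ is bounded below by $\varepsilon\int_\varepsilon^T e^{rA}Ce^{rA^*}\,dr$ for any $0<\varepsilon<T$, and the latter is positive definite by the same reasoning applied to the subinterval $[\varepsilon,T]$ (which again reduces to Hypothesis \ref{h1}, since $Q_T-Q_\varepsilon=\int_\varepsilon^T e^{rA}Ce^{rA^*}\,dr$ and $Q_\varepsilon$ are both positive semi-definite with $Q_T$ non singular). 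Either route gives $\det U>0$.
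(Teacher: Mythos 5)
Your main argument is correct, but it takes a different route from the paper. You prove positive definiteness qualitatively: if $\langle U\xi,\xi\rangle=\int_0^T r\,|\sqrt C\,e^{rA^*}\xi|_H^2\,dr=0$, the continuous nonnegative integrand forces $\sqrt C\,e^{rA^*}\xi\equiv 0$ on $[0,T]$, hence $\langle Q_T\xi,\xi\rangle=0$ and $\xi=0$ by Hypothesis~\ref{h1}; since $U$ is symmetric and semi-definite positive, $\det U>0$ follows. The paper instead argues quantitatively in the semidefinite order: since $r\ge T/2$ on $[T/2,T]$, one has $U\ge \tfrac T2\int_{T/2}^T e^{rA}Ce^{rA^*}\,dr=\tfrac T2\,e^{AT/2}Q_{T/2}\,e^{A^*T/2}$, and monotonicity of the determinant on positive matrices gives an explicit lower bound $\det U\ge c\,e^{T\,\mathrm{Tr}\,A}\det Q_{T/2}>0$. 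Your approach is more elementary and yields only nondegeneracy (which is all the lemma asserts), while the paper's yields an explicit quantitative bound. One caution about your ``alternative'' route, which is essentially the paper's: your parenthetical justification that $Q_T-Q_\varepsilon=\int_\varepsilon^T e^{rA}Ce^{rA^*}\,dr$ is positive definite ``since $Q_T-Q_\varepsilon$ and $Q_\varepsilon$ are both positive semi-definite with $Q_T$ nonsingular'' is not valid as stated --- a nonsingular sum of two semidefinite matrices need not have nonsingular summands. The correct step, as in the paper, is the change of variables $\int_\varepsilon^T e^{rA}Ce^{rA^*}\,dr=e^{\varepsilon A}\,Q_{T-\varepsilon}\,e^{\varepsilon A^*}$, which is nonsingular by Hypothesis~\ref{h1} and invertibility of $e^{\varepsilon A}$ (or, alternatively, repeat your kernel argument with the time shift $r=\varepsilon+s$). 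Since your primary argument does not rely on this aside, the proof stands.
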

 \begin{proof}
 We have in fact 
 $$
U\ge \frac{T}2\int_{T/2}^T e^{rA}Ce^{rA^*}dr=\frac{T}2
\int_{0}^{T/2} e^{(T/2+z)A}Ce^{(T/2+z)A^*}dz=\frac{T}2\,e^{AT/2}\,Q_{T/2}\,e^{A^*T/2}.
$$
It follows that $\det U\ge  \frac{T}2\,e^{T\,{\rm Tr}\,A}\,\det Q_{T/2}>0,$ as claimed.
 \end{proof}
 
 \begin{Proposition}
 \label{p5c}
 For all $x\in H$ set
\begin{equation}
\label{e18e}
u(x,t):=e^{(T-t)A^*}U^{-1} e^{TA}x,\quad t\in[0,T]
\end{equation}
and define $a(x,\cdot):=\Q_T u(x,\cdot)$.
Then  it results $a(x,T)=e^{TA}x.$
Moreover,  there is $c_T,\,c_{1,T}>0$ such that
   \begin{equation}
   \label{e31e}
   |u(t,x)|_H\le c_T
   |x|_H,\quad \forall\,t\in[0,T],\,x\in H,
   \end{equation}
   and
   \begin{equation}
   \label{e25}
   |a(x,t)|_H\le  c_{1,T} |x|_H,\quad \forall\,t\in[0,T],\,x\in H.
   \end{equation}
\end{Proposition}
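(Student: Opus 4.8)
The plan is to reduce everything to one explicit closed form for $a(x,\cdot)$, from which the three assertions are immediate. Since $\det U>0$ by Lemma \ref{l5b}, the inverse $U^{-1}$ is a bounded operator on $H$, so $u(x,\cdot)$ defined by \eqref{e18e} is a continuous $H$-valued function on $[0,T]$; consequently $a(x,\cdot)=\mathbb Q_T u(x,\cdot)$ is continuous by the kernel representation \eqref{e13}--\eqref{e14}, and in particular $a(x,T)$ is meaningful.

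First I would substitute \eqref{e18e} into \eqref{e13} and split the integral at $s=t$. In the region $s\le t$ one uses $K(t,s)=\int_0^s e^{(t-r)A}Ce^{(s-r)A^*}dr$ and in the region $s\ge t$ one uses $K(t,s)=\int_0^t e^{(t-r)A}Ce^{(s-r)A^*}dr$; in both cases the product $e^{(s-r)A^*}e^{(T-s)A^*}=e^{(T-r)A^*}$ eliminates the $s$-dependence inside the exponentials. Exchanging the order of the $r$- and $s$-integrations (legitimate since the integrand is continuous on the compact triangle $\{0\le r\le s\le T\}$), the two pieces combine: the weight multiplying $e^{(t-r)A}Ce^{(T-r)A^*}$ is $(t-r)$ from $\{r\le s\le t\}$ and $(T-t)$ from $\{t\le s\le T\}$, with sum $(T-r)$. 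This yields
\begin{equation}
\label{eplan1}
a(x,t)=\left(\int_0^t (T-r)\,e^{(t-r)A}Ce^{(T-r)A^*}\,dr\right)U^{-1}e^{TA}x,\qquad t\in[0,T].
\end{equation}

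Now all three claims follow. Setting $t=T$ in \eqref{eplan1} and changing variable $\rho=T-r$ turns the bracket into $\int_0^T\rho\,e^{\rho A}Ce^{\rho A^*}d\rho=U$, whence $a(x,T)=UU^{-1}e^{TA}x=e^{TA}x$, i.e. \eqref{e10h}. For \eqref{e31e} note that $\|e^{\sigma A}\|$ and $\|e^{\sigma A^*}\|$ are bounded by some $M_T$ on $[0,T]$, so $|u(x,t)|_H\le M_T^2\|U^{-1}\|\,|x|_H=:c_T|x|_H$. For \eqref{e25} one can either estimate the bracket in \eqref{eplan1} directly by $T^2M_T^2\|C\|$, or use $|a(x,t)|_H\le\int_0^T\|K(t,s)\|\,|u(x,s)|_H\,ds$ together with the uniform bound $\|K(t,s)\|\le TM_T^2\|C\|$ read off from \eqref{e14} and the bound just obtained for $u$.

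I do not expect a genuine obstacle here: the only delicate point is the bookkeeping in the double integral and the justification of Tonelli, which is routine on the compact triangle. The one conceptual step is recognising that the matrix integral obtained at $t=T$ is exactly $U$ after the substitution $\rho=T-r$ — this is precisely why $U$ (rather than $Q_T$ itself) is the right normalising matrix in \eqref{e18e}, and it is what forces the boundary condition \eqref{e10h}.
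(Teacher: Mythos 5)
Your proposal is correct and follows essentially the same route as the paper: substitute \eqref{e18e} into the kernel representation \eqref{e13}--\eqref{e14}, combine $e^{(s-r)A^*}e^{(T-s)A^*}=e^{(T-r)A^*}$, exchange the order of integration to produce the weight $(T-r)$, and recognise the resulting matrix at $t=T$ as $U$ after the change of variable $\rho=T-r$, with the bounds \eqref{e31e} and \eqref{e25} obtained from $\sup_{\sigma\in[0,T]}\|e^{\sigma A}\|$ and $\|U^{-1}\|$ exactly as in the paper. The only (harmless) difference is that you derive the closed form of $a(x,t)$ for every $t\in[0,T]$, whereas the paper performs the computation only at $t=T$, which is all that is needed for \eqref{e10h}.
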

\begin{proof}
Write 
$$\begin{array}{l}
\ds a(x,T)=\int_{0}^{T} K(T,s)u(x,s)\, ds=\ds \int_{0}^T\left(\int_{0}^se^{(T-r)A}Ce^{(s-
r)A^*}dr\right)e^{(T-s)A^*}U^{-1}e^{TA}xds\\
\\
=\ds \int_{0}^T\left(\int_{0}^{s}e^{(T-r)A}Ce^{(T-r)A^*}dr\right)U^{-1}e^{TA}xds
=\ds \int_{0}^T(T-r)e^{(T-r)A}Ce^{(T-r)A^*}dr\,U^{-1} e^{TA}x=e^{TA}x,
\end{array}
$$
as required.
 Finally,
 $$
 |u(x,t)|_H\le \sup_{s\in [0,T]}\,\|e^{sA}\|^2_{\mathcal L(H)}\|U^{-1}\|_{\mathcal L(H)}\, |x|_H,\quad t\in [0,T],
 $$
so that 
  \eqref{e31e} and  \eqref{e25} follow easily.
\end{proof}
Now we   prove the first new result of the paper.
\begin{Proposition}
\label{p1}
Under Hypotheses \ref{h1} and \ref{h2} the semigroup $R^{\mathcal O_r}_{T},\;T>0$,
is strong Feller.
\end{Proposition}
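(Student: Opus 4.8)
The plan is to take the representation formula \eqref{e2.8} as the starting point. First I would make the derivation sketched in the Introduction rigorous: the substitution $h\mapsto h+a(x,\cdot)$ in \eqref{e9h} is legitimate because $a(x,\cdot)=\Q_Tu(x,\cdot)$ lies in $\Q_T(X)\subset\Q_T^{1/2}(X)$, the Cameron--Martin space of $N_{\Q_T}$, so that $N_{a(x,\cdot),\Q_T}\sim N_{\Q_T}$ with density \eqref{e14h}; and, by Proposition \ref{p5c}, $a(x,T)=e^{TA}x$, which is exactly what removes $x$ from the argument of $\varphi$ and turns \eqref{e9h} into \eqref{e2.8}. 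Abbreviate $\Gamma_x(h):=\Gamma(h+d(x,\cdot))=\sup_{t\in[0,T]}g(h(t)+d(x,t))$ as in \eqref{e16h}. Here $F(x)=\langle\Q_T^{-1}a(x,\cdot),a(x,\cdot)\rangle_X\ge0$ depends smoothly (quadratically) on $x$; $G(x,h)=\langle\Q_T^{-1}a(x,\cdot),h\rangle_X=\langle u(x,\cdot),h\rangle_X$ for $N_{\Q_T}$-a.e.\ $h$ is linear in $h$ with $|G(x,h)|\le|u(x,\cdot)|_X|h|_X$, controlled linearly in $|x|_H$ by Proposition \ref{p5c}; and $x\mapsto d(x,\cdot)\in E$ is continuous (linear). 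So it suffices to prove that
$$x\ \longmapsto\ \int_E\one_{\{\Gamma_x(h)\le r\}}\,\varphi(h(T))\,e^{-\frac12F(x)+G(x,h)}\,N_{\Q_T}(dh)$$
is continuous on $H$, and I would argue separately on $\mathcal O_r$, on $H\setminus\overline{\mathcal O_r}$, and on $\partial\mathcal O_r$.

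On $H\setminus\overline{\mathcal O_r}$ the function is identically $0$ near the point, hence continuous. For $x_0\in\mathcal O_r$ and $x_n\to x_0$, the continuity of $F$, $G(\cdot,h)$, $d(\cdot,\cdot)$ and $g$ forces the integrand to converge at every $h$ with $\Gamma_{x_0}(h)\neq r$; and for $x$ in a fixed ball around $x_0$ the integrand is dominated by $\|\varphi\|_\infty\,e^{c|h|_X}$ with $c$ depending only on that ball, which is $N_{\Q_T}$-integrable by Fernique's theorem (\cite{DaZa14}). Dominated convergence then gives continuity at $x_0$ \emph{provided}
\begin{equation}\tag{$\ast$}
N_{\Q_T}\big(\{h\in E:\ \Gamma_{x_0}(h)=r\}\big)=0 .
\end{equation}
At a boundary point $x_0\in\partial\mathcal O_r$ one even has $N_{\Q_T}(\{h:\Gamma_{x_0}(h)\le r\})=0$: indeed $\Gamma_{x_0}(h)\ge g(x_0)=r$ a.s.\ (since $h(0)=0$ $N_{\Q_T}$-a.s.\ and $d(x_0,0)=x_0$, because $a(x_0,0)=0$), and $\{h:\Gamma_{x_0}(h)\le r\}=\{h:\,h(\cdot)+d(x_0,\cdot)\text{ stays in }\overline{\mathcal O_r}\}$, whose $N_{\Q_T}$-measure equals, after a Cameron--Martin change of measure, the probability that $X(\cdot,x_0)$ never leaves $\overline{\mathcal O_r}$; this vanishes by the non-stickiness of $\partial\mathcal O_r$. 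From this both $R^{\mathcal O_r}_T\varphi(x_0)=0$ and $R^{\mathcal O_r}_T\varphi(x)\to0$ as $x\to x_0$ follow.

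Thus everything reduces to $(\ast)$ (together with its boundary strengthening), and this I expect to be the main obstacle. The set $C_{x_0}:=\{\Gamma_{x_0}\le r\}$ is closed and convex, every $h\mapsto g(h(t)+d(x_0,t))$ being convex; moreover, using $g(0)=0$ together with $g(\lambda y)\le\lambda g(y)$ for $\lambda\in[0,1]$ and $g(\lambda y)\ge\lambda g(y)$ for $\lambda\ge1$, the map $\lambda\mapsto\Gamma_{x_0}(-d(x_0,\cdot)+\lambda w)/\lambda$ is nondecreasing for every $w\in E$, so $\Gamma_{x_0}$ attains the value $r$ at most once along each ray issued from $-d(x_0,\cdot)$; in particular $\Gamma_{x_0}$ is not locally constant at the level $r$, hence $\{\Gamma_{x_0}=r\}=\partial C_{x_0}$. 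I would then invoke the Stroock--Varadhan support theorem for $X(\cdot,x_0)$: since $(A,\sqrt C)$ is controllable by Hypothesis \ref{h1} and $0\in\mathcal O_r$, the process started at $x_0\in\mathcal O_r$ stays in $\mathcal O_r$ on $[0,T]$ with positive probability, i.e.\ $\mathrm{int}\,C_{x_0}$ has positive $N_{\Q_T}$-mass; equivalently $C_{x_0}$ has an interior point $h^\ast$ lying in the topological support of $N_{\Q_T}$ (which is a proper closed subspace of $E$ when $\det C=0$). Writing $\partial C_{x_0}$ as the unit level set of the Minkowski gauge of $C_{x_0}$ based at $h^\ast$---a continuous convex functional with essential infimum $0<1$ under $N_{\Q_T}$---the classical absolute continuity of the distribution of a convex continuous functional of a Gaussian random element to the right of its essential infimum (Tsirelson's theorem on the distribution of the supremum of a Gaussian process, and its refinements) yields $N_{\Q_T}(\partial C_{x_0})=0$, which is $(\ast)$. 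The genuinely delicate inputs, where hypoellipticity enters, are therefore the support/controllability step producing the interior point $h^\ast$ inside $\mathrm{supp}\,N_{\Q_T}$ for every $x_0\in\mathcal O_r$, and the non-stickiness of $\partial\mathcal O_r$ for boundary points; once these are in hand the Gaussian absolute-continuity result is routine to apply.
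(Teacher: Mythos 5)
Your overall scheme is the same as the paper's: the published proof of Proposition \ref{p1} also starts from \eqref{e2.8}, controls the Cameron--Martin density factor exactly as you do (Lipschitz dependence of $F,G$ on $x$, an exponential dominant integrable under $N_{\Q_T}$), and treats the moving indicator by dominated convergence using the continuity of $x\mapsto d(x,\cdot)$. Where you go beyond the paper is in isolating explicitly the null-level-set condition $(\ast)$, i.e.\ $N_{\Q_T}(\Gamma(h+d(x_0,\cdot))=r)=0$, which pointwise convergence of the indicators really requires; the paper does not address this inside Proposition \ref{p1}, and the information of this type is in effect only produced later, from the Ehrhard inequality (absolute continuity of $\Lambda_x$, Lemma \ref{l13s}). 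So the value of your proposal stands or falls with your proof of $(\ast)$ and of the boundary case, and there both of your key supporting claims fail.

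First, the boundary claim is false under Hypotheses \ref{h1}--\ref{h2}: ``non-stickiness'' of $\partial\mathcal O_r$ is an elliptic phenomenon and breaks down for degenerate $C$. In Example \ref{ex1} take $\mathcal O_r$ to be a smoothed square centred at $0$ (one may choose $g$ convex, $C^1$, with a flat top face of outward normal $(0,1)$), and let $x_0=(a,b)$ be a point of that face with $a<0$. The drift $Ax_0=(0,a)$ points strictly inward, the noise acts only tangentially, and $X_2(t)=b+at+\int_0^t W(s)\,ds<b$ for all small $t$ a.s.; combining this with a support/tube argument from an interior point one gets $\P\bigl(X(t,x_0)\in\overline{\mathcal O_r}\ \forall t\in[0,T]\bigr)>0$. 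Hence $R^{\mathcal O_r}_T\one(x_0)>0$, your assertion $R^{\mathcal O_r}_T\varphi(x_0)=0$ is wrong, and since $h(0)=0$ a.s.\ forces $\Gamma(h+d(x_0,\cdot))\ge r$, one even has $N_{\Q_T}(\Gamma(h+d(x_0,\cdot))=r)>0$ at such a point. Second, at interior points your route to $(\ast)$ rests on the assertion that controllability of $(A,\sqrt C)$ plus $0\in\mathcal O_r$ gives positive probability of remaining in $\mathcal O_r$ on $[0,T]$ from every $x_0\in\mathcal O_r$. Controllability does not yield this: again in Example \ref{ex1}, let $\mathcal O_r$ be a thin ellipse with long axis along the diagonal $\xi_1=\xi_2$ and let $x_0$ lie on the axis with positive coordinates; any controlled path ($\dot\xi_1=u$, $\dot\xi_2=\xi_1$) confined to the ellipse must have $\xi_1\approx\xi_2$, hence $\dot\xi_2\gtrsim\xi_2$, and is expelled after a time of logarithmic order, so for $T$ large the staying probability is zero and no interior point of $C_{x_0}$ in the support can be produced this way. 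Without that input, Tsirelson's theorem only excludes atoms of the law of $\Gamma(h+d(x_0,\cdot))$ strictly above its essential infimum, and the scenario of an atom sitting exactly at $r$ (essential infimum equal to $r$) is not ruled out, so $(\ast)$ is not established. To close the gap you should derive the absolute continuity of the law of $\Gamma(h+d(x_0,\cdot))$ near $r$ as the paper does, from Ehrhard's inequality via the concavity of $\Phi^{-1}\circ\Lambda_{x_0}$ (Lemma \ref{l13s}), noting that the positivity $0<\Lambda_{x_0}(s)<1$ near $s=r$ needed there is exactly the point you tried to obtain from the support theorem and must be proved, not assumed.
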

\begin{proof}
Let $\varphi\in B(\overline{\mathcal O_r})$ and $x_0,x\in \overline{\mathcal O_r}$. Then by \eqref{e2.8} we find
$$
\begin{array}{l}
\ds |R^\mathcal O_T\varphi(x)-R^\mathcal O_T\varphi(x_0)|
\le \ds\|\varphi\|_\infty\int_{X} \,\left|\exp\left\{-\tfrac12\,F(x)+G(x,h)\right\}-\exp\left\{-\tfrac12\,F(x_0)+G(x_0,h)\right\}\right|N_{\mathbb Q_T}(dh) \\
\\
 \ds+\|\varphi\|_\infty\int_{X} {\mathds 1}_{{\{\Gamma(h+ d(x_0,\cdot) \le r\}\setminus \{\Gamma(h+ d(x,\cdot) \le r\}}}\,\exp\left\{-\tfrac12\,F(x_0)+G(x_0,h)\right\}N_{\mathbb Q_T}(dh)=:A_1+A_2.
\end{array}
$$
Taking into account    \eqref{e31e} we have
$$F(x)=\langle \Q_T^{-1}a(x,\cdot) ,a(x,\cdot)  \rangle_X=
  \langle u(x,\cdot), \Q_Tu(x,\cdot)\rangle_H\le \|\Q_T\|_{\mathcal L(H)}c^2_T\,
   |x|^2_H$$
  and
  $$|G(x,h)|=| \langle \Q_T^{-1}a(x,\cdot) ,h  \rangle_X|
 \le c_TT|x|_H|h|_X$$
Therefore
$$
\begin{array}{l}
\ds\exp\left\{-\tfrac12\,F(x)+G(x,h)\right\}-\exp\left\{-\tfrac12\,F(x_0)+G(x_0,h)\right\}\\
\\
\ds=\int_0^1\exp\left\{-\tfrac12\,F((1-\alpha)x_0+\alpha x)+G((1-\alpha)x_0+\alpha x,h))\right\}(x-x_0)\,d\alpha\\
\\
\ds\le \int_0^1\exp\left\{G((1-\xi)x_0+\xi x,h))\right\}|x-x_0|\,d\alpha\le\exp\{c_TT|x|_H|h|_X\}|x-x_0|.
\end{array}
$$ 
It follows that
$$
A_1\le \|\varphi\|_\infty\int_X\exp\{c_TT|x|_H|h|_X\}\,dN_{\Q_T}\,|x-x_0|.
$$
Since the integral above is finite we have $\lim_{x\to x_0} A_1=0.$
Concerning $A_2$ we have $\lim_{x\to x_0} A_2=0$
by the continuity of $d(x,\cdot)$ and the dominated convergence theorem.
The proof is complete.
\end{proof}

 \section{Approximating semigroup}

 We   define an approximating semigroup  $R^{\mathcal O_r}_{T,n}\varphi$, $T>0,$  on 
 $B_b(\overline{\mathcal O_r})$ setting for all $n\in\N$,
   \begin{equation}
  \label{e32a}
R^{\mathcal O_r}_{T,n}\varphi(x)=\int_{\{\Gamma_{n}(h+ d(x,\cdot)) \le r\}} \varphi(h(T)) \exp\{-\tfrac12\,F(x)+G^n(x,h)\} N_{\mathbb Q_T}(dh),
 \end{equation}
 where $F(x),\;x\in \overline{\mathcal O_r}$ is defined by \eqref{e12h}, $d(x,t)$ by \eqref{e16h} and $\Gamma_n$ by
 \begin{equation}
  \label{e32f}
\Gamma_n(h+ d(x,\cdot))= \sup\{g(h(t_j)+ d(x,t_j)),\;t_j=\tfrac{jT}{2^n},\;j=0,1,...,2^n\},\quad \forall h\,\in E,\,n\in\N
 \end{equation}
  and
 \begin{equation}
  \label{e31c}
 G^n(x,h)= \sum_{j=1}^{2^n} (u(x,t_j)\cdot h(t_j))\,(t_j-t_{j-1}),\quad x\in \overline{\mathcal O_r},\;h\in E.
   \end{equation} 
   \begin{Lemma}
   \label{l1L}
 (i)  It results
   \begin{equation}
  \label{e29}
 |\Gamma_n(h+ d(x,\cdot))-\Gamma_n(h_1+ d(x,\cdot))|\le a+be^{|h|_E+|h_1|_E},\quad h_1,\,h_2\in E.
   \end{equation} 
  (ii)  Moreover $h\to\Gamma_n(h+ d(x,\cdot))$  belongs to $W^{1,2}(E,N_Q)$  
   $$
    |\Gamma'_n(h+ d(x,\cdot))\cdot d(x,\cdot)|
    \le(a+be^{2|h|_E}|d(x,\cdot)|.
   $$
 \end{Lemma}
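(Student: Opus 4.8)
The plan is to reduce both assertions to the finitely many point evaluations $h\mapsto h(t_j)$, $j=0,\dots,2^n$, on which the function $\gamma(h):=\Gamma_n(h+d(x,\cdot))=\max_{0\le j\le 2^n}g(h(t_j)+d(x,t_j))$ depends, and then to conclude from the $C^1$--regularity of $g$, the growth bound in Hypothesis \ref{h2}(ii), the estimate \eqref{e25} on $d(x,\cdot)$, and Fernique's theorem.

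For part (i) I would start from the elementary inequality $|\sup_j\alpha_j-\sup_j\beta_j|\le\sup_j|\alpha_j-\beta_j|$, applied with $\alpha_j=g(h(t_j)+d(x,t_j))$ and $\beta_j=g(h_1(t_j)+d(x,t_j))$, so that it suffices to estimate each $|\alpha_j-\beta_j|$. Writing $v=h_1(t_j)+d(x,t_j)$, $w=h(t_j)-h_1(t_j)$ and $g(v+w)-g(v)=\int_0^1 g'(v+sw)\cdot w\,ds$, one gets $|\alpha_j-\beta_j|\le|w|_H\sup_{0\le s\le1}|g'(v+sw)|_H$, where $|w|_H\le|h|_E+|h_1|_E$ and $|v+sw|_H\le|h|_E+|h_1|_E+|d(x,\cdot)|_E$, the last term being $\le c\,|x|_H$ by \eqref{e25} (up to a bound on $\sup_t\|e^{tA}\|$). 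Inserting Hypothesis \ref{h2}(ii) then yields a bound of the form $(|h|_E+|h_1|_E)(a+e^{b(|h|_E+|h_1|_E)+c|x|_H})$, and absorbing the polynomial prefactor and the $x$--dependent factor into the exponential, at the price of enlarging the constants, gives \eqref{e29}.

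For part (ii) I would first observe that $\gamma$ is a finite maximum of functions $h\mapsto g(h(t_j)+d(x,t_j))$, each of which is the composition of $g\in C^1$ with a bounded affine map $E\to H$; by Hypothesis \ref{h2}(ii) each such function, as well as its Gaussian gradient — namely $g'(h(t_j)+d(x,t_j))$ paired with the fixed Cameron--Martin representatives of the evaluation functionals $h\mapsto h_i(t_j)$ — is dominated, up to a constant, by $a+e^{b|h|_E}$, which is square--integrable against $N_{\mathbb Q_T}$ by Fernique's theorem; hence each term belongs to $W^{1,2}(E,N_{\mathbb Q_T})$. Since $W^{1,2}(E,N_{\mathbb Q_T})$ is a vector lattice, with $D\max(f_1,f_2)=\mathds 1_{\{f_1\ge f_2\}}Df_1+\mathds 1_{\{f_1<f_2\}}Df_2$, a finite induction yields $\gamma\in W^{1,2}(E,N_{\mathbb Q_T})$. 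For the pointwise estimate I would use that $N_{\mathbb Q_T}$--a.s.\ one has $h(t_0)=0$, while $(h(t_1),\dots,h(t_{2^n}))$ is a non--degenerate Gaussian vector — its successive conditional laws being Gaussian with the nonsingular covariances $Q_{T/2^n}$, by Hypothesis \ref{h1} — so that its law is absolutely continuous; since the maximum defining $\gamma$ is a locally Lipschitz function of $(h(t_j))_j$ whose ties form a Lebesgue--null set (by Hypothesis \ref{h2}(i)), it follows that for $N_{\mathbb Q_T}$--a.e.\ $h$ this maximum is attained at a single index $j^*=j^*(h,x)$, and then $\Gamma'_n(h+d(x,\cdot))\cdot d(x,\cdot)=g'(h(t_{j^*})+d(x,t_{j^*}))\cdot d(x,t_{j^*})$. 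Consequently $|\Gamma'_n(h+d(x,\cdot))\cdot d(x,\cdot)|\le|g'(h(t_{j^*})+d(x,t_{j^*}))|_H\,|d(x,\cdot)|_E\le(a+e^{b(|h|_E+|d(x,\cdot)|_E)})\,|d(x,\cdot)|_E$, and absorbing constants once more gives the asserted bound.

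The step I expect to be the main obstacle is controlling the behaviour of the finite maximum at its ties: one must know that $\gamma$ is $N_{\mathbb Q_T}$--a.e.\ differentiable with the expected gradient and, more importantly, that it lies in the \emph{Gaussian} Sobolev space $W^{1,2}(E,N_{\mathbb Q_T})$ rather than being merely cylindrically Lipschitz. This is exactly what the lattice property of $W^{1,2}(E,N_{\mathbb Q_T})$ and the non--degeneracy of $(h(t_1),\dots,h(t_{2^n}))$ — itself a consequence of Hypothesis \ref{h1} — provide; everything else is a routine combination of the mean value theorem, Hypothesis \ref{h2}(ii) and Fernique's theorem.
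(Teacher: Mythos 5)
Your argument is correct, and it is in effect a full elaboration of what the paper dispatches in two lines: the authors prove (i) by simply invoking Hypothesis~\ref{h2}(ii), and (ii) by appealing to the ``well known'' fact (recorded in Remark~\ref{rnew}) that Lipschitz--type functions belong to $W^{1,p}(E,N_{\Q_T})$, the local Lipschitzianity of $\Gamma_n$ being immediate from the $C^1$ regularity of $g$. Where you differ is in making (ii) concrete: instead of citing the general Lipschitz--Sobolev embedding, you exploit the cylindrical structure of $\Gamma_n$ (a finite maximum of compositions of $g$ with the evaluations $h\mapsto h(t_j)$), use Fernique to get square integrability of the exponential bounds, the lattice property of the Gaussian Sobolev space to handle the maximum, and the non--degeneracy of $(h(t_1),\dots,h(t_{2^n}))$ (the content of Proposition~\ref{p5}) to get an a.e.\ unique maximizing index $j^*$ and hence the explicit formula $\Gamma_n'(h+d(x,\cdot))\cdot d(x,\cdot)=g'(h(t_{j^*})+d(x,t_{j^*}))\cdot d(x,t_{j^*})$, from which the pointwise bound follows. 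This buys something the paper's terse proof does not make visible: since $d(x,\cdot)$ is \emph{not} a Cameron--Martin direction, the directional derivative appearing in the statement is only meaningful as an a.e.\ defined Gateaux derivative, and your argmax argument justifies exactly that, whereas the paper leaves it implicit. The only mismatches with the literal statement (the exponent coefficients $1$ and $2$ in \eqref{e29} and in the bound of (ii), and the absorption of the polynomial prefactor and of the $x$--dependence, legitimate because $x$ ranges over the bounded set $\overline{\mathcal O_r}$) are imprecisions already present in the paper's formulation, not gaps in your proof.
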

 \begin{proof}
 (i) follows from Hypothesis \ref{h2}(ii)  
 and (ii) is a well known consequence of the local lipschitzianity of $\Gamma_n$.
 \end{proof}

    \begin{Proposition}
 \label{p0} 
 Under Hypotheses \ref{h1}, \ref{h2} for all   $\varphi\in B(\overline{\mathcal O_r})$ it results
 $$
 \lim_{n\to\infty}R^{\mathcal O_r}_{T,n}\varphi(x)=R^{\mathcal O_r}_{T}\varphi(x),\quad\,\forall\,\;x\in \overline{\mathcal O_r}.
 $$
\end{Proposition}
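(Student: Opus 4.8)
The plan is to show that $R^{\mathcal O_r}_{T,n}\varphi(x)\to R^{\mathcal O_r}_{T}\varphi(x)$ by comparing the two integral representations \eqref{e32a} and \eqref{e2.8} term by term, namely the exponential weights $\exp\{-\tfrac12 F(x)+G^n(x,h)\}$ versus $\exp\{-\tfrac12 F(x)+G(x,h)\}$, and the indicator sets $\{\Gamma_n(h+d(x,\cdot))\le r\}$ versus $\{\Gamma(h+d(x,\cdot))\le r\}$. Since $x$ is fixed throughout, I would abbreviate $d(x,\cdot)$ by $d$ and regard everything as a function of the integration variable $h\in E$ under the Gaussian measure $N_{\mathbb Q_T}$.

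First I would handle the convergence of the exponential weights. By \eqref{e31c}, $G^n(x,h)=\sum_{j=1}^{2^n}(u(x,t_j)\cdot h(t_j))(t_j-t_{j-1})$ is precisely a Riemann sum for the integral $\int_0^T u(x,s)\cdot h(s)\,ds=\langle u(x,\cdot),h\rangle_X$, which by \eqref{e13h} equals $G(x,h)$. Because $u(x,\cdot)$ is continuous on $[0,T]$ (it is given explicitly by \eqref{e18e}) and $N_{\mathbb Q_T}$ is concentrated on $E=C([0,T];H)$, for $N_{\mathbb Q_T}$-a.e.\ $h$ the function $s\mapsto u(x,s)\cdot h(s)$ is continuous, hence the Riemann sums converge: $G^n(x,h)\to G(x,h)$ pointwise a.e. Consequently $\exp\{-\tfrac12 F(x)+G^n(x,h)\}\to\exp\{-\tfrac12 F(x)+G(x,h)\}$ a.e. For a dominating function I would use the bound $|G^n(x,h)|\le\sum_{j}|u(x,t_j)|_H\,|h(t_j)|_H(t_j-t_{j-1})\le c_T T\,|x|_H\,|h|_E$ coming from \eqref{e31e}, so the weights are all bounded by $\exp\{c_T T|x|_H\,|h|_E\}$, which is $N_{\mathbb Q_T}$-integrable by Fernique's theorem (exactly the integrability already invoked in the proof of Proposition \ref{p1}).

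Next I would treat the indicator sets. By \eqref{e32f}, $\Gamma_n(h+d)=\sup\{g(h(t_j)+d(t_j)):j=0,\dots,2^n\}$ is the maximum of $g(h(\cdot)+d(\cdot))$ over the dyadic grid, while $\Gamma(h+d)=\sup_{t\in[0,T]}g(h(t)+d(t))$ by \eqref{e16h}. Since $h+d$ is continuous and $g$ is continuous, the function $t\mapsto g(h(t)+d(t))$ is continuous on $[0,T]$, the dyadic grids are nested and their union is dense, so $\Gamma_n(h+d)\uparrow\Gamma(h+d)$ monotonically as $n\to\infty$ for every $h\in E$. Therefore $\mathbb 1_{\{\Gamma_n(h+d)\le r\}}$ is nonincreasing in $n$ and converges pointwise to $\mathbb 1_{\{\Gamma(h+d)\le r\}}$ on the set where $\Gamma(h+d)\neq r$, i.e.\ outside $\{h:\sup_t g(h(t)+d(t))=r\}$. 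The main obstacle is to verify that this exceptional set is $N_{\mathbb Q_T}$-null: one needs that the running maximum of the Gaussian process $t\mapsto g(X(t,x))$ has no atom at the level $r$. I would argue this using that $N_{\mathbb Q_T}$ is a non-degenerate Gaussian on the relevant directions (Hypothesis \ref{h1}) together with the strict positivity $g'(x)\neq 0$ on $\partial\mathcal O_r$ from Hypothesis \ref{h2}(i): near a boundary point the process crosses the level $r$ transversally, so $\{\Gamma(h+d)=r\}$ has probability zero. This is essentially the same absence-of-atoms fact underlying the dominated-convergence step in the proof of Proposition \ref{p1}, and I would reduce to it.

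Finally, with both pointwise convergences and the common integrable dominating function $\|\varphi\|_\infty\exp\{c_T T|x|_H\,|h|_E\}$ in hand, the Lebesgue dominated convergence theorem gives
\[
\int_{\{\Gamma_n(h+d)\le r\}}\varphi(h(T))\,e^{-\frac12 F(x)+G^n(x,h)}\,N_{\mathbb Q_T}(dh)\ \longrightarrow\ \int_{\{\Gamma(h+d)\le r\}}\varphi(h(T))\,e^{-\frac12 F(x)+G(x,h)}\,N_{\mathbb Q_T}(dh),
\]
that is $R^{\mathcal O_r}_{T,n}\varphi(x)\to R^{\mathcal O_r}_{T}\varphi(x)$, as claimed. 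I expect the only genuinely delicate point to be the null-set assertion $N_{\mathbb Q_T}(\{h:\Gamma(h+d)=r\})=0$; everything else is Riemann-sum convergence, monotone approximation of a sup by finite sups, and Fernique integrability.
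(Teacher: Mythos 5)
Your argument follows essentially the same route as the paper: Riemann-sum convergence $G^n(x,h)\to G(x,h)$ for every $h\in E$, the uniform bound $|G^n(x,h)|\le c_T T|x|_H|h|_E$ from \eqref{e31e} giving a Fernique-integrable dominating function, and dominated convergence once the indicator sets are handled. The one place where your proposal deviates is the point you yourself flag as ``genuinely delicate'': the claim $N_{\Q_T}(\{h:\Gamma(h+d(x,\cdot))=r\})=0$, for which you offer only a transversality heuristic. This claim is in fact not needed here. Since the dyadic grids are nested, $\Gamma_n(h+d(x,\cdot))$ increases to $\Gamma(h+d(x,\cdot))$ for \emph{every} $h\in E$, and both sets are defined by the non-strict inequality $\le r$; hence $B_n:=\{\Gamma_n(h+d(x,\cdot))\le r\}$ is a decreasing sequence with $\bigcap_n B_n=B:=\{\Gamma(h+d(x,\cdot))\le r\}$ exactly, so $\one_{B_n}\to\one_B$ pointwise on all of $E$ --- in particular on $\{\Gamma(h+d(x,\cdot))=r\}$ one has $\Gamma_n\le\Gamma=r$, so the indicators equal $1$ for every $n$ and already agree with the limit. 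This is precisely the paper's argument ($B\subset B_n$, $B_n\downarrow B$, hence $N_{\Q_T}(B_n)\downarrow N_{\Q_T}(B)$), and with it your dominated-convergence step closes without any exceptional set. As written, though, your proof rests on an unproved no-atom assertion: the heuristic ``the process crosses the level transversally'' is not a proof, and the reduction to Proposition \ref{p1} does not help, since the paper's proof of Proposition \ref{p1} does not establish such a fact either (absolute continuity of the law of $\Gamma$ is only obtained later, via the Ehrhard inequality in Section 4.1). So either delete that step and invoke the monotone set convergence above, or, if you insist on the a.e.\ formulation, you must actually prove the no-atom property rather than assert it.
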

 \begin{proof} 
 Let $\varphi\in B_b(\overline{\mathcal O_r})$.
 Then
   \begin{equation}
  \label{e31}
  \begin{array}{lll}
 \ds |R^{\mathcal O_r}_{T,n}\varphi(x)-R^{\mathcal O_r}_T\varphi(x)|&\le&\ds \|\varphi\|_\infty \int_{\{\Gamma_n(h+ d(x,\cdot) \le r\}}  \exp\{-\tfrac12\,F(x)\}  \Big|\exp\{G^{\,n}(x,h)\}-\exp\{G(x,h)\}\Big| N_{\mathbb Q_T}(dh) \\
 \\
 &&\ds + \|\varphi\|_\infty\int_{\{\Gamma_n(h+ d(x,\cdot)) \le r\}\backslash \{\Gamma(h+ d(x,\cdot) \le r\}}\exp\{-\tfrac12\,F(x)+ G (x,h)\} N_{\mathbb Q_T}(dh). 
 \end{array}
 \end{equation} 
  Taking into account \eqref{e31e}, yields
  \begin{equation}
   \label{e31f}
   |G^n(x,h)|\le  \sum_{j=1}^{2^n} |u(x,t_j)|_H\, |h(t_j)|_H\,(t_j-t_{j-1})\le c_T|x|_{C(\overline{\mathcal O_r})}\,|h|_E,\quad x\in \overline{\mathcal O_r},\;h\in E
   \end{equation}
 Now, set 
$$  
   B_n:=\{ h:g(h(t_i)) +d(x,t_i)\le r,\; \;t_j=\tfrac{jT}{2^n},\;j=0,1,...,2^n\},\quad B
   =\{ h:\,g(h(t)) +d(x,t)\le r,\, t\in[0,T]\}.
   $$
   Then $ B\subset B_n$ and $\bigcap_{n\in \N}B_n=B$, so that $ N_\Q(B_n)\downarrow N_\Q(B)$ as $n\to\infty$.

 Moreover,
 \begin{equation}
   \label{e30}
 \lim_{n\to \infty}G^{\,n}(x,h)=G(x,h), \quad \forall\,h\in E,\, x\in \overline{\mathcal O_r}
  \end{equation}
   and   by \eqref{e18e} there is $c_T>0$ such that
    \begin{equation}
   \label{e31b}
   \exp\{G^{\,n}(x,h)\}\le e^{c_T |h|_E},\quad\forall\;h\in E,\,\forall\, x\in \overline{\mathcal O_r}.
    \end{equation}
The conclusion follows  from  the dominated convergence theorem.

 \end{proof}

 It useful to write  an expression of $R^{\mathcal O_r}_{T,n}\varphi$ as a finite dimensional integral. To this purpose we consider the linear mapping 
  \begin{equation}
  \label{e24}
E=C([0,T];H)\to H^{2^n},\quad h\to (h(t_1),h(t_2),\cdots,h(t_{2^n})),\quad t_j=\tfrac{jT}{2^n},\;j=0,1,...,2^n,
 \end{equation}
whose law   is obviously gaussian, say $ N_{\Q_{T,n}}.$ 
 Then for any $n\in\N$  and
   any
$\varphi:H^{2^n}\to\R$  bounded and Borel   we have,
\begin{equation}
\label{e41}
\begin{array}{lll}
\ds\int_E\varphi(h(t_1),h(t_2),\ldots,h(t_{2^n}))\, N_{\Q_T}(dh)
&= &\ds\E[\varphi(W_A(t_1),W_A(t_2),\ldots,W_A(t_{2^n}))]
\\ \\
&=&\ds \int_{H^{2^n}}\varphi(\xi_1,\ldots,\xi_{2^n})\, N_{\Q_{T,n}}(d\xi_1\cdots d\xi_{2^n}).
\end{array}
\end{equation}
 Now we can write the approximating semigroup as an integral over $H^{2^n}$, namely
  \begin{equation}
  \label{e32az}
R^{\mathcal O_r}_{T,n}\varphi(x)=\int_{\{\Gamma_n(\xi+ d(x,\cdot)) \le r\}} \varphi(\xi_{2^n}) \exp\{-\tfrac12\,F(x)+G^{\,n}(x,\xi)\} N_{\mathbb Q_{T,n}}(d\xi),\quad \xi\in H^{2^n},
 \end{equation}
 where
 $$
 \Gamma_n(\xi+ d(x,\cdot))=\sup\{ g(\xi_j+d(x,t_j)), \; t_j=\tfrac{jT}{2^n},\;j=0,1,\ldots,2^n\}
 $$
 and
 $$
 G^{\,n }(x,\xi)=\sum_{j=1}^{2^n} (u(x,t_j)\cdot \xi_j)\,(t_j-t_{j-1}),\quad x\in \overline{\mathcal O_r},\;\xi\in H^{2^n}.
$$
\begin{Proposition}
 \label{p5}
 $\Q_{T,n}$ has a bounded inverse for all $n\in\N$,  so the Cameron--Martin space of $ N_{\Q_{T,n}} $   is the whole $H^{2^n}$.
 \end{Proposition}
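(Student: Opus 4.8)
The plan is to show that the covariance $\Q_{T,n}$, viewed as a matrix on the finite-dimensional space $H^{2^n}$, is strictly positive definite; in finite dimension this is equivalent to invertibility with bounded inverse, and for a nondegenerate Gaussian measure on $\R^{N}$ the Cameron--Martin space is the whole space, which is exactly the assertion.

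First I would split $(W_A(t_1),\dots,W_A(t_{2^n}))$ into independent Gaussian blocks. For $k=1,\dots,2^n$ set
$$
Z_k:=\int_{t_{k-1}}^{t_k}e^{(t_k-s)A}\sqrt C\,dW(s)\in H .
$$
From \eqref{e1} and the semigroup property one has $W_A(t_k)=e^{(t_k-t_{k-1})A}W_A(t_{k-1})+Z_k$, whence by induction $W_A(t_k)=\sum_{i=1}^{k}e^{(t_k-t_i)A}Z_i$. Since the $Z_k$ are Wiener integrals of deterministic integrands carried by the pairwise disjoint intervals $(t_{k-1},t_k]$, they are mutually independent, and the change of variable $r=t_k-s$ shows that each $Z_k$ has covariance
$$
\int_0^{t_k-t_{k-1}}e^{rA}Ce^{rA^*}\,dr=Q_{T/2^{n}},
$$
which is non singular by Hypothesis \ref{h1}, because $T/2^{n}>0$.

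Next I would record that the linear map $\mathcal T_n:H^{2^n}\to H^{2^n}$, $(\zeta_i)_i\mapsto\big(\sum_{i=1}^{k}e^{(t_k-t_i)A}\zeta_i\big)_k$, is block lower triangular with identity diagonal blocks, hence invertible, and that $(W_A(t_1),\dots,W_A(t_{2^n}))=\mathcal T_n(Z_1,\dots,Z_{2^n})$. Consequently
$$
\Q_{T,n}=\mathcal T_n\,\mathrm{diag}\big(Q_{T/2^n},\dots,Q_{T/2^n}\big)\,\mathcal T_n^{*}
$$
is a product of invertible matrices, hence invertible, and its inverse is automatically bounded since $H^{2^n}$ is finite dimensional. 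Therefore $N_{\Q_{T,n}}$ is nondegenerate and its Cameron--Martin space $\Q_{T,n}^{1/2}(H^{2^n})$ is all of $H^{2^n}$.

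I do not expect a genuine obstacle here: the one point requiring care is the nondegeneracy of the increments $Z_k$, and this is precisely where Hypothesis \ref{h1} enters, through the matrices $Q_{T/2^n}$; everything else is elementary linear algebra. An alternative, change-of-basis-free argument computes directly $\mathrm{Var}\big(\sum_j\langle\eta_j,W_A(t_j)\rangle_H\big)=\int_0^T|\phi(s)|_H^2\,ds$ with $\phi(s)=\sum_{j:\,t_j\ge s}\sqrt C\,e^{(t_j-s)A^*}\eta_j$, and shows by backward induction over $k=2^n,\dots,1$ that vanishing of this integral forces $\eta_k=0$ on $(t_{k-1},t_k]$, again using that $Q_{T/2^n}$ is non singular.
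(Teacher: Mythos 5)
Your proof is correct, and it takes a somewhat different route from the paper. The paper argues by contradiction on the kernel: if $\Q_{T,n}\xi=0$, the characteristic function shows that the linear functional $\sum_h\langle\xi_h,W_A(t_h)\rangle_H$ vanishes $\P$--a.s.; rewriting it through the process $L(t)=\int_0^te^{-sA}\sqrt C\,dW(s)$ and telescoping over the independent increments $L(t_k)-L(t_{k-1})$, one multiplies by the first increment term and takes expectations to get $\rho_1+\cdots+\rho_{2^n}=0$ (using that $Q_{t_1}$ is nonsingular), then repeats to kill all partial sums and conclude $\xi=0$. You instead decompose $W_A(t_k)=\sum_{i\le k}e^{(t_k-t_i)A}Z_i$ with independent increments $Z_i$ of covariance $Q_{T/2^n}$ and obtain the explicit congruence $\Q_{T,n}=\mathcal T_n\,\mathrm{diag}(Q_{T/2^n},\dots,Q_{T/2^n})\,\mathcal T_n^*$ with $\mathcal T_n$ block lower triangular and unipotent, so strict positive definiteness is immediate. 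Both arguments rest on exactly the same two ingredients (independence of increments over the subintervals and Hypothesis \ref{h1} applied at the smallest time step $t_1=T/2^n$), but your factorization is constructive: it yields $\Q_{T,n}^{-1}=(\mathcal T_n^*)^{-1}\mathrm{diag}(Q_{T/2^n}^{-1},\dots)\,\mathcal T_n^{-1}$ explicitly, which could be useful elsewhere, whereas the paper's argument only establishes triviality of the kernel (which in finite dimension is of course enough). Your alternative sketch at the end --- writing $\mathrm{Var}\bigl(\sum_j\langle\eta_j,W_A(t_j)\rangle_H\bigr)=\int_0^T|\phi(s)|_H^2\,ds$ and doing backward induction on the subintervals --- is essentially the paper's kernel argument in a cleaner guise (and avoids the bookkeeping with $L$ and the $\rho_i$), with the only difference that you peel off the last block first while the paper peels off the first.
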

 \begin{proof} 
Let $n\in\N,$
then by \eqref{e41} we have
$$
\E[e^{i\lambda(\sum_{h=1}^{2^n}\langle \xi_h,W_A(t_h)\rangle_H}]=e^{-\frac12\lambda^2\langle  \Q_{T,n}\,\xi,\xi \rangle_{H^{2^n}}},\quad \xi=(\xi_1,\ldots,\xi_{2^n})
$$
We claim that if  $\Q_{T,n} \,\xi=0$ then $\xi=0$.
In fact, if $\Q_{T,n}\, \xi=0$, we have
$
\E[e^{i\lambda(\sum_{h=1}^{2^n}\langle \xi_h,W_A(t_h)\rangle_H}]=1
$
and so,
  \begin{equation}
 \label{e30k}
\sum_{h=1}^{2^n}\langle \xi_h,W_A(t_h)\rangle_H=0, \quad\,N_{ \Q_{n,T}}\mbox{\rm --a.s.}.
\end{equation}
Now, setting $L(t)=\int_0^te^{-sA}dW(s),$ 
and $\rho_i =e^{-t_iA^*}\xi_i,$ we have
$$
\langle \rho_1+\rho_2+\cdots+\rho_{2^n},L(t_1)\rangle+\langle \rho_2+\cdots +\rho_{2^n},L(t_2)-L(t_1)\rangle+\cdots +\langle \rho_{2^n},L(t_{2^n})-L(t_{2^n-1})\rangle=0.
$$
Multiplying both sides by $\langle \rho_1+\rho_2+\cdots+\rho_{2^n},L(t_1)\rangle$ and taking expectation, yields
$$ \rho_1+\rho_2+\cdots+\rho_{2^n}=0, $$
since $
\E(\langle v,L(t_1)\rangle^2)=\langle Q_{t_1}v,v\rangle$
and  $Q_{t_1}$ is non singular by Hypothesis \ref{h1}.

Similarly we obtain $\rho_k+\rho_{k+1}+\cdots+\rho_{2^n}=0$ for $k=2,3,\ldots,2^n$,
 which implies finally $\xi=0$.
   \end{proof}
 \section{Differentiating the approximating semigroup}

First note that by \eqref{e31c}
we have
\begin{equation}
  \label{e31h}
 (G^{\,n}_h(x,h)\cdot (d_x(x,\cdot)y)=
 \sum_{j=1}^{2^n} (u(x,t_j)\cdot d_x(x,t_j)y)\,(t_j-t_{j-1}),\quad x\in \overline{\mathcal O_r},\;h\in E. 
\end{equation} 
 \begin{Lemma}
 \label{l7}
 For all $x\in \overline{\mathcal O_r},\;y\in H$, $n\in\N$, $\varphi\in B_b(\overline{\mathcal O_r})$ we have 
 \begin{equation}
 \label{e40}
  D_xR^\mathcal O_{T,n}\varphi(x)\cdot y=:M_1(n,x,y)+M_2(n,x,y),
 \end{equation}
where 
 \begin{equation}
 \label{e41a}
 \begin{array}{lll}
\ds M_1(n,x,y)=\ds
 \int\limits_{\{\Gamma_n(h+d(x,\cdot))\le r\}}&& \varphi(h(T))\exp\left\{-\tfrac12\,F(x)+G^{\,n}(x,h)\right\}\\
\\
&&\ds\times\left(-\tfrac12\,F_x(x)y+G_x^{\,n}(x,h)y-G^{\,n}_h(x,h)\cdot (d_x(x,\cdot)y)\right)\,  N_{\Q_{T }}(dh)
 \end{array}
 \end{equation}
and
\begin{equation}
 \label{e42}
  M_2(n,x,y)=\hspace{-5mm}\ds\int\limits_{\{\Gamma_n(h+d(x,\cdot))\le r\}} \varphi(h(T))\exp\left\{-\tfrac12\,F(x)+G^{\,n}(x,h)\right\}\langle \Q_{T,n}^{-1/2} (d_x(x,\cdot )y), \Q_{T,n}^{-1/2}\,h\rangle_{H^{2^n}} \,  N_{\Q_{T}}(dh),
  \end{equation}
 where
   \begin{equation}
 \label{e41g}
   \langle \Q_{T,n}^{-1/2} (d_x(x,\cdot )y), \Q_{T,n}^{-1/2}\,h\rangle_{H^{2^n}}= \sum_{i,j=1}^{2^n} (\Q_{T,n}^{-1})_{i,j}(d_x(x,t_i)y)\cdot h(t_j),
 \end{equation}
 where $$(\Q_{T,n}^{-1})_{i,j}=\langle \Q_{T,n}^{-1}\psi_i,\psi_j\rangle_{H^{2^n}},\quad i,j=1,..,2^n$$
 and $(\psi_j)$ is the standard  orthogonal basis of $H^{2^n}$.
  \end{Lemma}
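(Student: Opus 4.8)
\medskip
\noindent The plan is to reduce the computation of $D_xR^{\mathcal O_r}_{T,n}\varphi$ to an ordinary differentiation under the integral sign. This cannot be done directly on \eqref{e32az}, because there both the domain $\{\Gamma_n(\xi+d(x,\cdot))\le r\}$ and the merely Borel factor $\varphi(\xi_{2^n})$ depend on $x$; the first thing to do is to remove this dependence.

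\emph{Step 1: moving the $x$--dependence out of the domain.} In the finite--dimensional representation \eqref{e32az} I would translate the Gaussian variable in $H^{2^n}$ by $d(x,\cdot)$, i.e.\ set $\xi=h+d(x,\cdot)$. By Proposition \ref{p5} the covariance $\Q_{T,n}$ is invertible, so the whole space $H^{2^n}$ coincides with the Cameron--Martin space of $N_{\Q_{T,n}}$ and the translation is admissible, producing the density $\exp\{\langle\Q_{T,n}^{-1}d(x,\cdot),\xi\rangle_{H^{2^n}}-\tfrac12|\Q_{T,n}^{-1/2}d(x,\cdot)|^2_{H^{2^n}}\}$. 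What makes this manoeuvre work is that $d(x,T)=e^{TA}x-a(x,T)=0$ by Proposition \ref{p5c}, so the last coordinate $\xi_{2^n}$, namely $h(T)$, is untouched by the shift and the factor $\varphi(h(T))$ is preserved. One then obtains an identity for $R^{\mathcal O_r}_{T,n}\varphi(x)$ in which the domain of integration $\{\Gamma_n(\cdot)\le r\}$ is independent of $x$, the whole $x$--dependence being carried by a weight that is smooth (in fact the exponential of a quadratic polynomial) in $x$.

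\emph{Step 2: differentiate, then translate back.} On this identity I would differentiate in $x$ along $y$ under the integral sign and then undo the translation. The chain rule yields four contributions to the logarithmic $x$--derivative of the weight: $-\tfrac12F_x(x)y$ and $G^{\,n}_x(x,h)y$ coming from the explicit $x$--dependence of $\exp\{-\tfrac12 F(x)+G^{\,n}\}$; $-G^{\,n}_h(x,h)\cdot(d_x(x,\cdot)y)$ coming from the shift entering the second slot of $G^{\,n}$; and $\langle\Q_{T,n}^{-1}d_x(x,\cdot)y,\xi\rangle_{H^{2^n}}-\langle\Q_{T,n}^{-1}d(x,\cdot),d_x(x,\cdot)y\rangle_{H^{2^n}}$ coming from the Cameron--Martin density. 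After substituting back $\xi=h+d(x,\cdot)$, the weight collapses to $\exp\{-\tfrac12 F(x)+G^{\,n}(x,h)\}$, the domain reverts to $\{\Gamma_n(h+d(x,\cdot))\le r\}$, and in the last contribution the two terms involving $d(x,\cdot)$ cancel by symmetry of $\Q_{T,n}^{-1}$, leaving only $\langle\Q_{T,n}^{-1}d_x(x,\cdot)y,h\rangle_{H^{2^n}}=\langle\Q_{T,n}^{-1/2}(d_x(x,\cdot)y),\Q_{T,n}^{-1/2}h\rangle_{H^{2^n}}$. Collecting the first three contributions gives $M_1(n,x,y)$ as in \eqref{e41a}, the last one gives $M_2(n,x,y)$ as in \eqref{e42}, and converting the finite--dimensional integral back into an integral over $E$ with $N_{\Q_T}$ via \eqref{e41} (the integrand depends on $h$ only through $h(t_1),\dots,h(t_{2^n})$) produces \eqref{e40}; identity \eqref{e41g} is just the expansion of $\Q_{T,n}^{-1}$ in the standard basis of $H^{2^n}$.

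\emph{The main obstacle} is the rigorous justification of the differentiation under the integral in Step 2: one must check that the $\epsilon$--derivative of the translated integrand evaluated at $x+\epsilon y$ is dominated, uniformly for small $\epsilon$ and for $x$ in the compact set $\overline{\mathcal O_r}$, by an $N_{\Q_{T,n}}$--integrable function of $\xi$. The integrand is bounded by $\|\varphi\|_\infty$ times the product of $\exp\{G^{\,n}(x,\xi-d(x,\cdot))\}$ and the Cameron--Martin density, each of which grows at most like $e^{c|\xi|_{H^{2^n}}}$ by \eqref{e31f} and \eqref{e31e}--\eqref{e25}, while the derivative bracket grows at most linearly in $\xi$; since a non--degenerate Gaussian measure on $H^{2^n}$ integrates $(1+|\xi|_{H^{2^n}})e^{c|\xi|_{H^{2^n}}}$, the required domination holds. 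Once this is in place the remainder of Step 2 is a routine bookkeeping computation.
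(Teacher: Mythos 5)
Your proposal is correct and follows essentially the same route as the paper: translate by $d(x,\cdot)$ in the finite--dimensional picture \eqref{e32az} (legitimate by Proposition \ref{p5}, with $d(x,T)=0$ keeping $\varphi(\xi_{2^n})$ intact), apply Cameron--Martin, differentiate the now $x$--free--domain integral, translate back, and return to $E$ via \eqref{e41}, which is exactly the chain \eqref{e2.8c}--\eqref{e44} in the paper. Your extra care in dominating the differentiated integrand (linear growth of the bracket times exponential bounds from \eqref{e31e}--\eqref{e31f} against the non--degenerate Gaussian $N_{\Q_{T,n}}$) is a justification the paper leaves implicit, and your cancellation-by-symmetry of the two $d(x,\cdot)$ terms reproduces the paper's identity \eqref{e37b}.
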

  \begin{proof}
  We first write identity \eqref{e32a} as
   $$
R^{\mathcal O_r}_{T,n}\varphi(x)=\int_{\{\Gamma_{n}(\xi+ d(x,\cdot)) \le r\}} \varphi(\xi_{2^n}) \exp\{-\tfrac12\,F(x)+G^n(x,\xi)\} N_{\mathbb Q_{T,n}}(d\xi),
$$
   Then we drop the dependence on $x$ under the domain of  integration by making the translation $\xi\to \xi- d(x,\cdot)$ 
    and recalling that $d(x,T)=0$; we write
\begin{equation}
 \label{e2.8c}
 R^{\mathcal O_r}_{T,n}\varphi(x)=\int_{\{\Gamma_n(\xi)\le r\}} \varphi(\xi_{2^n})\exp\left\{-\tfrac12\,F(x)+G^{\,n}(x,\xi- d(x,\cdot))\right\} N_{d(x,\cdot),\Q_{T,n}}(d\xi). 
  \end{equation}
 So,  using again the Cameron--Martin Theorem (this is possible thanks to Proposition \ref{p5}), we have
 \begin{equation}
 \label{e35a}
 R^{\mathcal O_r}_{T,n}\varphi(x)=\int_{\{\Gamma_n(\xi)\le r\}} \varphi(\xi_{2^n})\exp\left\{-\tfrac12\,F(x)+G^{\,n}(x,\xi- d(x,\cdot)\right\}\chi_n(x,\xi)\, N_{\Q_{T,n}}(d\xi),
 \end{equation}
where
\begin{equation}
 \label{e36a}
\chi_n(x,\xi)=\exp\left\{-\tfrac12| \Q_{T,n}^{-1/2} d(x,\cdot)|_{H^{2^n}}+\langle \Q_{T,n}^{-1/2}d(x,\cdot ),\Q_{T,n}^{-1/2}\xi \rangle_{H^{2^n}}\right\}.
  \end{equation}
We  now can  differentiate $R^{\mathcal O_r}_{T,n}\varphi(x)$  at any   given direction $y\in H$.
  Taking into account   that for any $x,y\in H$ we have,
 \begin{equation}
 \label{e37b}
D_x \chi_{n}(x,\xi)\cdot y=\langle \Q_{T,n}^{-1/2} (d_x(x,\cdot )y), \Q_{T,n}^{-1/2}\,(\xi -d(x,\cdot ))\rangle_{H^{2^n}} \,\chi_n(x,\xi).
 \end{equation}
 we find
 \begin{equation}
 \label{e38a}
 \begin{array}{lll}
\ds D_xR^{\mathcal O_r}_{T,n}\varphi(x)\cdot y &=&\ds\int_{\{\Gamma_n(\xi)\le r\}} \varphi(\xi_{2^n})\exp\left\{-\tfrac12\,F(x)+G^{\,n}(x,\xi- d(x,\cdot))\right\}\\\\
&&\ds\times \Big[-\tfrac12\,F_x(x)y+G_x^{\,n}(x,\xi- d(x,\cdot))y -\langle G^{\,n}_\xi(x,\xi- d(x,\cdot)),d_x(x,\cdot)y\rangle_{H^{2^n}}\\
\\
&&+\langle \Q_{T,n}^{-1/2} (d_x(x,\cdot )y), \Q_{T,n}^{-1/2}\,(\xi -d(x,\cdot ))\rangle_{H^{2^n}} \Big]\chi_n(x,\xi)\, N_{\Q_{T,n}}(d\xi).
 \end{array}
 \end{equation}
Here $F_x$ and $G_x$ denote the derivatives with respect to $x$  of $F$ and $G$ respectively, whereas  $G_\xi$ is
the derivative with respect to $\xi$.
  Now  making   the  opposite translation
 $\xi_j\to \xi_j+d(x,t_j),\, j=0,1,\ldots,2^n,$  we obtain  
\begin{equation}
 \label{e44}
 \begin{array}{l}
\ds D_xR^\mathcal O_{T,n}\varphi(x)\cdot y=
 \int_{\{\Gamma_n(\xi+d(x,\cdot))\le r\}} \varphi(\xi_{2^n})\exp\left\{-\tfrac12\,F(x)+G^{\,n}(x,\xi)\right\}\\
\\
\hspace{27mm}\ds\times\left(-\tfrac12\,F_x(x)y+G_x^{\,n}(x,\xi)y-\langle G^{\,n}_\xi(x,\xi),d_x(x,\cdot)y\rangle_{H^{2^n}}\right) \,  N_{\Q_{T,n}}(d\xi)\\
\\
 +\ds\int_{\{\Gamma_n(\xi+d(x,\cdot))\le r\}} \varphi(\xi_{2^n})\exp\left\{-\tfrac12\,F(x)+G^{\,n}(h,\xi )\right\}\langle \Q_{T,n}^{-1/2} (d_x(x,\cdot )y), \Q_{T,n}^{-1/2}\,(\xi)\rangle_{H^{2^n}} \,  N_{\Q_{T,n}}(d\xi).
 \end{array}
   \end{equation}
  We arrive at the conclusion making the change of variables \eqref{e24}.
  
 \end{proof}
 
 In Section 4 we shall easily prove the
 existence of the limit of $M_1(n,x,y)$ as $n\to\infty$. Instead a problem arises, as   said in the introduction,  for the term $M_2(n,x,y)$
due to the  factor
$$\langle \Q_{T,n}^{-1/2} (d_x(x,\cdot )y), \Q_{T,n}^{-1/2}\,h\rangle_{H^{2^n}},
$$
  because $d_x(x,\cdot )y$ does not  belong to $ \Q_{T}^{1/2}(X)$. So, we look in the next Lemma \ref{l12} for a different expression of $M_2(n,x,y)$ that does not contain this term.
  Before we need to recall  the definition  and some properties of the Sobolev space  $W^{1,p}(E,N_{\Q_T}).$ 
  We shall need a result   which is a straightforward generalisation of  
  \cite[Proposition 6.1.5]{Ce01}
 \begin{Lemma}
\label{l2z}
 For any $\varphi\in C^1_b(E)$ there exists a sequence $(\varphi_n)\in C^1_b(X)$ such that
\begin{itemize}
\item[(i)] $\ds\lim_{n\to \infty}\varphi_n(h)\to \varphi(h),\quad  \forall\;h\in E$.
\item[(ii)] $\ds 
\lim_{n\to \infty}\langle D\varphi_n(h),\eta\rangle_X=D\varphi(h)\cdot \eta,$ $\quad\forall\,h,\eta\in E$.
\end{itemize}
\end{Lemma}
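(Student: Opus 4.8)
The plan is to obtain the $\varphi_n$ by precomposing $\varphi$ with a sequence of linear ``time--smoothing'' operators $S_n$ that map $X=L^2(0,T;H)$ boundedly into $E=C([0,T];H)$ and converge strongly to the identity on $E$. Concretely, I would fix mollifiers $\rho_n\in C^\infty_c(\R)$ with $\rho_n\ge 0$, $\int_\R\rho_n=1$ and $\mathrm{supp}\,\rho_n\subset[-1/n,1/n]$; given $h\in X$ I extend it by even reflection across the endpoints $0$ and $T$ to a function $\tilde h$ defined on a neighbourhood of $[0,T]$, and set $S_nh:=(\rho_n*\tilde h)|_{[0,T]}$. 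Then $S_n$ is linear, $S_nh\in C^\infty([0,T];H)\subset E$, and by Young's inequality $|S_nh|_E\le c\,\|\rho_n\|_{L^2}\,|h|_X$, so $S_n\in\mathcal L(X,E)$ for each $n$. Moreover, if $h\in E$ then $\tilde h$ is uniformly continuous near $[0,T]$ and agrees with $h$ there, so $|S_nh-h|_E\le\omega_{\tilde h}(1/n)\to 0$; thus $S_nh\to h$ in $E$ for every $h\in E$, and likewise $S_n\eta\to\eta$ in $E$ for every $\eta\in E$.

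Next I would set $\varphi_n:=\varphi\circ S_n:X\to\R$. Since $\varphi\in C^1_b(E)$ — so that $\varphi$ and $D\varphi:E\to E^*$ are bounded and continuous — and $S_n\in\mathcal L(X,E)$, the chain rule gives $\varphi_n\in C^1(X)$ with $D\varphi_n(h)=S_n^*D\varphi(S_nh)$ (as an element of $X^*\cong X$). Hence $\|\varphi_n\|_\infty\le\|\varphi\|_\infty$, $\sup_{h\in X}|D\varphi_n(h)|_X\le\|S_n\|_{\mathcal L(X,E)}\|D\varphi\|_\infty<\infty$, and $h\mapsto D\varphi_n(h)$ is continuous because $S_n$ and $S_n^*$ are bounded and $D\varphi$ is continuous; therefore $\varphi_n\in C^1_b(X)$ for every $n$.

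Finally I would verify (i) and (ii). For $h\in E$ we have $S_nh\to h$ in $E$ and $\varphi$ is $E$--continuous, so $\varphi_n(h)=\varphi(S_nh)\to\varphi(h)$, which is (i). For $h,\eta\in E$ one computes $\langle D\varphi_n(h),\eta\rangle_X=\langle S_n^*D\varphi(S_nh),\eta\rangle_X=D\varphi(S_nh)\cdot(S_n\eta)$, and since $S_nh\to h$, $S_n\eta\to\eta$ in $E$ while $D\varphi$ is bounded and continuous from $E$ into $E^*$,
\[
|D\varphi(S_nh)\cdot(S_n\eta)-D\varphi(h)\cdot\eta|\le\|D\varphi\|_\infty\,|S_n\eta-\eta|_E+\|D\varphi(S_nh)-D\varphi(h)\|_{E^*}\,|\eta|_E\longrightarrow 0,
\]
which is (ii).

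I expect the only genuinely delicate point to be the construction of $S_n$: one needs a smoothing operator that truly lands in $C([0,T];H)$ (not merely in $L^2$) and that converges \emph{uniformly} — not just in $L^2$ — when applied to a continuous function, and the even reflection at the two endpoints is exactly what removes the boundary defect of a naive convolution. Once $S_n$ has these two properties, the chain rule, the uniform bounds and the two limit passages are all routine. A convenient variant is to replace $S_n$ by the piecewise--linear interpolant, on the dyadic grid $\{jT/2^n\}$, of suitable local averages of $h$ (using one--sided averages at $0$ and $T$); this satisfies the same two properties and matches the discretisation used elsewhere in the paper.
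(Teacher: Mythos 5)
Your proposal is correct and follows essentially the same route as the paper, which also takes $\varphi_n=\varphi\circ S_n$ with $S_n$ the Steklov average $S_nh(t)=\tfrac n2\int_{t-1/n}^{t+1/n}\hat h(s)\,ds$ of the path extended beyond the endpoints, and then declares (i)--(ii) ``easy to check''. Your only deviations are cosmetic --- a general mollifier $\rho_n$ and an even rather than odd reflection at $0$ and $T$ --- and you supply exactly the details the paper omits (boundedness of $S_n:X\to E$, the chain rule $D\varphi_n(h)=S_n^*D\varphi(S_nh)$, and the two limit estimates).
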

 \begin{proof} 
For any $\varphi\in C^1_b(E)$   set
 $$
 \varphi_n:H\to E,\quad x\to  \varphi_n(x)(t)=\frac n2\int_{t-\frac 1n}^{t+\frac 1n}\hat{\varphi}(s)\,ds,\ \ \ \ t \in\,[0,T],
 $$
and $\hat{\varphi}(s)$ is the extension by oddness of $\varphi(s)$, for $s \in (-T,0)$ and $s \in\,(T,2T)$.
Then it is easy to check  that $(\varphi_n)$ fulfills (i) and (ii).
\end{proof}

The following result is similar to 
\cite[Proposition 4.2]{BoDaTu18}.
\begin{Proposition}
\label{p1z}
For all $\varphi\in C^1_b(E)$ and any $\eta\in Q^{1/2}(X)\subset E$ the following integration by parts formula holds
\begin{equation}
\label{e1w}
\int_E D\varphi(h)\cdot \eta   \,N_{\Q_T}(d h)=\int_E \varphi(h)\,\langle Q^{-1/2}h, Q^{-1/2}\eta  \rangle_H\,N_{\Q_T}(d h).
\end{equation}

\end{Proposition}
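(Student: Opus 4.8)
\emph{Proof (outline of the intended argument).}
Formula \eqref{e1w} is the Cameron--Martin integration by parts identity; the only feature that is not entirely routine is that $\varphi$ is assumed to be $C^1_b$ merely on the Banach space $E$, and not on the Hilbert space $X$, so the classical statement cannot be applied verbatim. The plan is to prove the formula first for test functions that are $C^1_b$ on all of $X$ and then to transfer it to $\varphi\in C^1_b(E)$ by means of the approximation provided by Lemma \ref{l2z}.

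\textbf{Step 1: the formula on $C^1_b(X)$.} Let $\psi\in C^1_b(X)$ and $\eta\in\Q_T^{1/2}(X)$, so that $\Q_T^{-1/2}\eta$ is well defined. Starting from the Cameron--Martin density \eqref{e11h}--\eqref{e13h} and the translation $h\mapsto h-s\eta$, I would write, for every $s\in\R$,
$$
\int_X\psi(h+s\eta)\,N_{\Q_T}(dh)=\int_X\psi(h)\,\exp\Big\{-\tfrac{s^2}{2}\,|\Q_T^{-1/2}\eta|_X^2+s\,\langle\Q_T^{-1/2}\eta,\Q_T^{-1/2}h\rangle_X\Big\}\,N_{\Q_T}(dh).
$$
Both sides are of class $C^1$ in $s$, the derivative being obtained by differentiating under the integral: on the left because $|D\psi(h+s\eta)\cdot\eta|\le\|D\psi\|_\infty|\eta|_X$, on the right because $h\mapsto\langle\Q_T^{-1/2}\eta,\Q_T^{-1/2}h\rangle_X$ is a centred Gaussian variable under $N_{\Q_T}$, hence $\exp\{c|\langle\Q_T^{-1/2}\eta,\Q_T^{-1/2}h\rangle_X|\}\in L^1(N_{\Q_T})$ for all $c>0$, which gives a dominating function uniform for $|s|\le1$. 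Evaluating the $s$-derivative at $s=0$ yields
$$
\int_X D\psi(h)\cdot\eta\,N_{\Q_T}(dh)=\int_X\psi(h)\,\langle\Q_T^{-1/2}\eta,\Q_T^{-1/2}h\rangle_X\,N_{\Q_T}(dh),
$$
that is \eqref{e1w} for $\psi\in C^1_b(X)$; since $N_{\Q_T}$ is concentrated on $E$, the integrals may equally be read over $E$.

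\textbf{Step 2: passage to $C^1_b(E)$.} Given $\varphi\in C^1_b(E)$, let $(\varphi_n)\subset C^1_b(X)$ be the sequence furnished by Lemma \ref{l2z}; from its explicit mollified construction one also has the $n$-uniform bounds $\|\varphi_n\|_\infty\le\|\varphi\|_\infty$ and $|\langle D\varphi_n(h),\eta\rangle_X|\le c_\varphi\,|\eta|_E$ for all $h\in E$. Applying Step 1 to each $\varphi_n$ gives
$$
\int_E\langle D\varphi_n(h),\eta\rangle_X\,N_{\Q_T}(dh)=\int_E\varphi_n(h)\,\langle\Q_T^{-1/2}\eta,\Q_T^{-1/2}h\rangle_X\,N_{\Q_T}(dh),
$$
and I would let $n\to\infty$ by dominated convergence: on the left the integrand tends to $D\varphi(h)\cdot\eta$ by Lemma \ref{l2z}(ii) (legitimate since $\eta\in\Q_T^{1/2}(X)\subset E$) and is bounded by the constant $c_\varphi|\eta|_E$ on the probability space; on the right the integrand tends to $\varphi(h)\langle\Q_T^{-1/2}\eta,\Q_T^{-1/2}h\rangle_X$ by Lemma \ref{l2z}(i) and is dominated by $\|\varphi\|_\infty\,|\langle\Q_T^{-1/2}\eta,\Q_T^{-1/2}h\rangle_X|\in L^1(N_{\Q_T})$. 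This establishes \eqref{e1w}.

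The single delicate point is the uniform control of $\varphi_n$ and, above all, of $\langle D\varphi_n(h),\eta\rangle_X$ required to justify the limit in Step 2; this is precisely why one appeals to the concrete mollifications of Lemma \ref{l2z}, for which such bounds are transparent, rather than to an abstract density argument in $W^{1,2}(E,N_{\Q_T})$. Everything else is routine.
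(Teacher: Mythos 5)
Your argument is correct and follows essentially the same route as the paper: establish the identity for test functions in $C^1_b(X)$ and then pass to $\varphi\in C^1_b(E)$ via the mollifying sequence of Lemma \ref{l2z}. The only difference is that you spell out what the paper leaves implicit --- a Cameron--Martin/differentiation-in-$s$ proof of the $C^1_b(X)$ case and the uniform bounds justifying dominated convergence in the limit $n\to\infty$ --- which is a welcome but not substantively different elaboration.
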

\begin{proof}
Let $\varphi_n\in C^1_b(X)$ be a sequence  
as in Lemma \ref{l2z}; then   we have
\begin{equation}
\label{e2w}
\int_H \langle D\varphi_n(h),\eta\rangle_X \,N_{\Q_T}(dh)
= \int_H \varphi_n(x)\,\langle Q^{-1/2}h, Q^{-1/2}\eta  \rangle_H\,dN_{\Q_T}(dh).
\end{equation}
The conclusion follows letting $n\to\infty.$
\end{proof}
\begin{Corollary}
For all $\varphi,\psi\in C^1_b(E)$ and any $\eta\in Q^{1/2}(X)\subset E$ the following integration by parts formula holds
\begin{equation}
\label{e3w}
\int_E D\varphi\cdot \eta \,\psi \, dN_{\Q_T}= - \int_E D\psi\cdot \eta \,\varphi \, dN_{\Q_T} +
\int_E \varphi\,\psi\,\langle Q^{-1/2}x, Q^{-1/2}z  \rangle_X \, dN_{\Q_T}.
\end{equation}
\end{Corollary}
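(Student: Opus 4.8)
The plan is to obtain \eqref{e3w} directly from Proposition \ref{p1z} by applying it to the product $\varphi\psi$ and using the Leibniz rule for directional derivatives on $E$. First I would check that $\varphi\psi\in C^1_b(E)$ whenever $\varphi,\psi\in C^1_b(E)$: it is bounded by $\|\varphi\|_\infty\|\psi\|_\infty$, and for every direction $\eta\in E$ the elementary product rule gives
\[
D(\varphi\psi)(h)\cdot\eta = (D\varphi(h)\cdot\eta)\,\psi(h) + \varphi(h)\,(D\psi(h)\cdot\eta),
\]
which is continuous in $h$ and, for $|\eta|_E\le 1$, bounded uniformly in $h$ by $\|D\varphi\|_\infty\|\psi\|_\infty+\|\varphi\|_\infty\|D\psi\|_\infty$. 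Hence $\varphi\psi$ is an admissible test function in \eqref{e1w}.

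Next, I would apply Proposition \ref{p1z} with $\varphi\psi$ in place of $\varphi$: for $\eta\in Q^{1/2}(X)\subset E$,
\[
\int_E D(\varphi\psi)(h)\cdot\eta\; N_{\Q_T}(dh) = \int_E \varphi(h)\,\psi(h)\,\langle Q^{-1/2}h, Q^{-1/2}\eta\rangle_X\; N_{\Q_T}(dh).
\]
On the left I substitute the Leibniz expansion of $D(\varphi\psi)\cdot\eta$ and split the integral; this is legitimate because each of the two resulting integrands is the product of a bounded function and the $N_{\Q_T}$--integrable linear functional $h\mapsto\langle Q^{-1/2}h, Q^{-1/2}\eta\rangle_X$ (equivalently, each directional derivative $D\varphi\cdot\eta$, $D\psi\cdot\eta$ is bounded since $\eta$ is fixed). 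This yields
\[
\int_E (D\varphi\cdot\eta)\,\psi\; dN_{\Q_T} + \int_E \varphi\,(D\psi\cdot\eta)\; dN_{\Q_T} = \int_E \varphi\,\psi\,\langle Q^{-1/2}h, Q^{-1/2}\eta\rangle_X\; dN_{\Q_T},
\]
and rearranging the first term to the right-hand side gives exactly \eqref{e3w}.

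There is essentially no obstacle here; the only points deserving a word are the stability of $C^1_b(E)$ under products and the validity of the Leibniz rule for the directional (not $X$--gradient) derivative, both of which are immediate. If one prefers to avoid invoking $C^1_b(E)$--stability of products altogether, one can instead run the approximation of Lemma \ref{l2z} on $\varphi$ and $\psi$ simultaneously, apply the finite-dimensional integration by parts \eqref{e2w} to the products $\varphi_n\psi_n\in C^1_b(X)$ (where the ordinary product rule for the $X$--gradient is available), and then pass to the limit $n\to\infty$ using (i) and (ii) of Lemma \ref{l2z} together with dominated convergence; the argument is otherwise identical.
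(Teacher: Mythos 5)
Your argument is correct and is exactly the route the paper intends: the corollary is stated without proof precisely because it follows from Proposition \ref{p1z} applied to the product $\varphi\psi$ together with the Leibniz rule for directional derivatives, which is what you do (and you rightly read the paper's $x,z$ in \eqref{e3w} as $h,\eta$). Nothing further is needed; the alternative approximation argument via Lemma \ref{l2z} you sketch is also fine but redundant.
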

 \begin{Remark}
 \label{rnew}
 \em
  By \eqref{e3w} it follows, by standard arguments, that the gradient operator $D$ is closable in $L^p(E,N_\Q)$ for any $p\ge 1$; we shall still denote by $D$ its closure and by $W^{1,p}(E,N_\Q)$ its domain. Finally, it is well known  that all Lipschitz continuous function  $\varphi:E\to \R$ belongs to $W^{1,p}(E,N_\Q)$.
  \textcolor{blue}{See also Lemma~\ref{l1L}}\end{Remark}

     Now we are  ready to prove the announced lemma.
    \begin{Lemma}
\label{l12}
 Assume Hypotheses \ref{h1} and \ref{h2}. Let $M_2(n,x,y)$ given by \eqref{e42}.
Then  for all $\varphi\in B_b(\overline{\mathcal O_r}),\,x\in\overline{\mathcal O_r},\,y\in H,\,n\in\N$, the following  identity holds
 \begin{equation}
\label{e45}
\begin{array}{lll}
  \ds M_2(n,x,y)&=&\ds \int_{\{\Gamma_n( h+d(x,\cdot)) \le r\}} \varphi(h(T))  D_h\exp\left\{-\tfrac12\,F(x)+G^{\,n}(x,h)\right\}\cdot (d_x(x,\cdot )y)\ N_{\Q_{T}}(dh) \\
 \\
&&+\ds\lim_{\epsilon\to 0}\frac1{2\epsilon}\int_{\{r-\epsilon\le\Gamma_n( h+d(x,\cdot)) \le r+\epsilon\}} \varphi(h(T))(\Gamma'_ n( h+d(x,\cdot))\cdot (d_x(x,\cdot )y) N_{\Q_{T}}(dh)\\
\\
&=&:M_{2,1}(n,x,y)+M_{2,2}(n,x,y).
  \end{array}
\end{equation}
 \end{Lemma}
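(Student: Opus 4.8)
The identity \eqref{e45} is an integration by parts, performed in the finite--dimensional picture. As in the proof of Lemma~\ref{l7}, read \eqref{e42} as an integral over $H^{2^n}$ against the Gaussian measure $N_{\Q_{T,n}}$, which is non degenerate by Proposition~\ref{p5}. Setting $\tilde\eta:=(d_x(x,t_1)y,\dots,d_x(x,t_{2^n})y)\in H^{2^n}$, the factor $\langle\Q_{T,n}^{-1/2}(d_x(x,\cdot)y),\Q_{T,n}^{-1/2}h\rangle_{H^{2^n}}$ appearing in \eqref{e42} is exactly the divergence (logarithmic derivative) of $N_{\Q_{T,n}}$ along $\tilde\eta$; the plan is therefore to move this derivative, by the integration by parts formula of Proposition~\ref{p1z} and its Corollary~\eqref{e3w} (legitimate because $\tilde\eta\in\Q_{T,n}^{1/2}(H^{2^n})=H^{2^n}$), onto the three remaining factors $\one_{\{\Gamma_n(\cdot+d(x,\cdot))\le r\}}$, $\varphi(h(T))$ and $\exp\{-\tfrac12 F(x)+G^{\,n}(x,h)\}$. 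Two of these are not smooth and are regularised in turn.

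The datum $\varphi$ is only Borel. The decisive point is that $d(x,T)=0$ by Proposition~\ref{p5c}, hence $d_x(x,T)y=0$: the terminal component of $\tilde\eta$ vanishes, so differentiation along $\tilde\eta$ does not act on $h(T)$. I would then choose $\varphi_k\in C^1_b(H)$, uniformly bounded, with $\varphi_k\to\varphi$ almost everywhere with respect to the law of $h(T)$ under $N_{\Q_{T,n}}$ (Gaussian, hence absolutely continuous), run the argument with $\varphi_k$, and let $k\to\infty$ at the end by dominated convergence; the term with $\varphi_k'(h(T))$ never arises, precisely because $\tilde\eta$ has zero terminal component. The indicator of $\{\Gamma_n(\cdot+d(x,\cdot))\le r\}$ jumps; I would replace it by $\theta_\epsilon(\Gamma_n(\cdot+d(x,\cdot)))$, with $\theta_\epsilon:\R\to[0,1]$ Lipschitz, equal to $1$ on $(-\infty,r-\epsilon]$, to $0$ on $[r+\epsilon,\infty)$ and affine on $[r-\epsilon,r+\epsilon]$, so that $\theta_\epsilon'=-\tfrac1{2\epsilon}$ there. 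By Lemma~\ref{l1L} the map $h\mapsto\Gamma_n(h+d(x,\cdot))$ belongs to $W^{1,p}(E,N_{\Q_T})$ for every $p$, and since $\theta_\epsilon$ is bounded and globally Lipschitz so does $\theta_\epsilon(\Gamma_n(\cdot+d(x,\cdot)))$ (cf. Remark~\ref{rnew}); multiplying by the smooth factor $\exp\{-\tfrac12 F(x)+G^{\,n}(x,\cdot)\}$ and by the bounded $\varphi_k(h(T))$ keeps it in $W^{1,p}$. Applying \eqref{e3w} along $\tilde\eta$ and using the product and chain rules, the derivative lands either on $\exp\{-\tfrac12 F(x)+G^{\,n}(x,h)\}$ --- which, by \eqref{e31h} and after $k\to\infty$ then $\epsilon\to0$, yields $M_{2,1}(n,x,y)$ --- or on $\theta_\epsilon(\Gamma_n(\cdot+d(x,\cdot)))$, producing $-\tfrac1{2\epsilon}$ times the integral of $\varphi_k(h(T))\,\exp\{-\tfrac12 F(x)+G^{\,n}(x,h)\}\,\big(\Gamma_n'(h+d(x,\cdot))\cdot(d_x(x,\cdot)y)\big)$ over the shell $\{r-\epsilon\le\Gamma_n(h+d(x,\cdot))\le r+\epsilon\}$. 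Since, by dominated convergence, the left--hand side tends to $M_2(n,x,y)$ and the first term to $M_{2,1}(n,x,y)$, the shell term necessarily converges, and its limit --- which equals $M_2(n,x,y)-M_{2,1}(n,x,y)$ --- is, up to the routine bookkeeping of signs, the term $M_{2,2}(n,x,y)$; this yields \eqref{e45}.

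The main obstacle is the passage $\epsilon\downarrow0$. First, the ``bulk'' integrals must converge by dominated convergence: the dominating functions are assembled from $e^{G^{\,n}(x,h)}$, from the divergence $\langle\Q_{T,n}^{-1}\tilde\eta,h\rangle$, from $G^{\,n}_h(x,h)\cdot(d_x(x,\cdot)y)$ (a constant, by \eqref{e31h}), and --- for the shell term --- from $\Gamma_n'(h+d(x,\cdot))\cdot(d_x(x,\cdot)y)$, and all of these lie in $L^p(E,N_{\Q_T})$ for every $p$ by Fernique's theorem and the growth estimate in Lemma~\ref{l1L}(ii). Second, one needs $\theta_\epsilon(\Gamma_n(\cdot+d(x,\cdot)))\to\one_{\{\Gamma_n(\cdot+d(x,\cdot))\le r\}}$ $N_{\Q_T}$--a.e., which reduces to $N_{\Q_T}\big(\{\Gamma_n(h+d(x,\cdot))=r\}\big)=0$; this is true because $\{g=r\}$ is a Lebesgue--null $C^1$ hypersurface of $H$ (since $g'\neq0$ there), so that $\{\Gamma_n(h+d(x,\cdot))=r\}$ is contained in $\bigcup_{j}\{g(h(t_j)+d(x,t_j))=r\}$, which is Lebesgue--null in $H^{2^n}$, while $N_{\Q_{T,n}}$ is a non degenerate --- hence absolutely continuous --- Gaussian measure by Proposition~\ref{p5}. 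The remaining delicate point, namely the validity of the integration by parts with a merely Borel datum $\varphi$, is handled by the $\varphi_k$--approximation together with the identity $d_x(x,T)y=0$.
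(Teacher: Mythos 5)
Your argument is correct and, at its core, is the same as the paper's: rewrite $M_2(n,x,y)$ as an integral over $H^{2^n}$ against the non--degenerate Gaussian $N_{\Q_{T,n}}$ (Proposition \ref{p5}), smear the indicator with the ramp $\theta_\epsilon$, integrate by parts along $\tilde\eta$, observe that the term hitting the datum disappears because $d_x(x,T)y=0$, and let $\epsilon\to 0$, identifying the shell term as $M_{2,2}$. Where you genuinely differ is the treatment of a merely Borel $\varphi$: the paper first proves the identity for $\varphi\in C^1(\overline{\mathcal O_r})$, extends to $C(\overline{\mathcal O_r})$ by uniform approximation, and then disposes of $B_b(\overline{\mathcal O_r})$ by a rather terse appeal to the strong Feller property (Proposition \ref{p1}); you instead approximate $\varphi$ by uniformly bounded $\varphi_k\in C^1_b$ converging a.e.\ with respect to the (non--degenerate, hence absolutely continuous) law of $h(T)$, and pass to the limit in $k$ at fixed $\epsilon>0$ before sending $\epsilon\to 0$. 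This order of limits is the right one: at fixed $\epsilon$ every term is an absolutely continuous integral against $N_{\Q_{T,n}}$, so a.e.\ approximation suffices, whereas after $\epsilon\to0$ the shell term is a surface--type quantity (concentrated on $\{\Gamma_n=r\}$, an $N_{\Q_T}$--null set) that an a.e.\ argument could not reach; since you take $k\to\infty$ first and then deduce, rather than compute, the existence of the $\epsilon$--limit as $M_2-M_{2,1}$, your route is sound and arguably more self--contained than the paper's Borel step. Your verification that $N_{\Q_{T,n}}\big(\Gamma_n(\cdot+d(x,\cdot))=r\big)=0$ (the level set $\{g=r\}$ being the boundary of a convex set, hence Lebesgue null) is indeed needed for $\theta_\epsilon\to\one_{(-\infty,r]}$ and is only implicit in the paper. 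One small caveat: the ``routine bookkeeping of signs'' is not entirely routine --- with your decreasing $\theta_\epsilon$ the shell contribution carries the factor $-\tfrac1{2\epsilon}$, and the weight $\exp\{-\tfrac12 F(x)+G^{\,n}(x,h)\}$ stays inside the shell integral (as in the paper's \eqref{e56}); you should reconcile this explicitly with the form of \eqref{e45}, whose displayed $M_{2,2}$ omits that weight.
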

\begin{proof}
By \eqref{e44} we have
$$
M_2(n,x,y)= \int_{\{\Gamma_n(\xi+d(x,\cdot))\le r\}} \varphi(\xi_{2^n})\exp\left\{-\tfrac12\,F(x)+G^{\,n}(h,\xi )\right\}\langle \Q_{T,n}^{-1/2} (d_x(x,\cdot )y), \Q_{T,n}^{-1/2}\,(\xi)\rangle_{H^{2^n}} \,  N_{\Q_{T,n}}(d\xi).
  $$
 Let us  first assume in addition that $\varphi\in C^1(\overline{\mathcal O_r})$.
Then we argue similarly  to   \cite[Proposition 4.5]{BoDaTu18}, defining a mapping $\theta_\epsilon:\R\to \R$,
\begin{equation}
\label{e1.4}
\begin{array}{l}
\theta_\epsilon(s)=\left\{\begin{array}{l}
0,\quad\mbox{\rm if}\;s\le r-\epsilon\\
\frac1{2\epsilon}(s-r+\epsilon),\quad\mbox{\rm if}\;r-\epsilon\le s\le r+\epsilon\\
1,\quad\mbox{\rm if}\;s\ge r+\epsilon.
\end{array}\right.
\end{array} 
\end{equation}
Then we  approximate $M_2(n,x,y)$ by setting
$$
  M^\epsilon_2(n,x,y)=\ds\int_{H^{2^n}} \theta_\epsilon(\Gamma_n( \xi+d(x,\cdot)))\varphi(\xi_{2^n})\exp\left\{-\tfrac12\,F(x)+G^{\,n}(x,\xi)\right\}\langle \Q_{T,n}^{-1/2} (d_x(x,\cdot )y), \Q_{T,n}^{-1/2}\,\xi\rangle_{H^{2^n}} \,  N_{\Q_{T,n}}(d\xi),
$$
so that the $\lim_{\epsilon\to 0}M^\epsilon_2(n,x,y)$ exists and is given by
  \begin{equation}
 \label{e42e}
  \lim_{\epsilon\to 0}M^\epsilon_2(n,x,y)=\ds\int_{\{\Gamma_n( \xi+d(x,\cdot))\le r\}} \varphi(\xi_{2^n})\exp\left\{-\tfrac12\,F(x)+G^{\,n}(x,\xi)\right\}\langle \Q_{T,n}^{-1/2} (d_x(x,\cdot )y), \Q_{T,n}^{-1/2}\,\xi\rangle_{H^{2^n}} \,  N_{\Q_{T,n}}(d\xi),
  \end{equation}
  Now, by a classical integration by parts formula we have, see e.g.
  \cite{Bo98}
 \begin{equation}
 \label{e55}
  \begin{array}{l}
  M^\epsilon_2(n,x,y)=\ds\int_{H^{2^n}} \theta_\epsilon(\Gamma_n( \xi+d(x,\cdot)))(D_h\varphi(\xi_{2^n})\cdot  d_x(x,T)y))\exp\left\{-\tfrac12\,F(x)+G^{\,n}(x,\xi)\right\}\  \,  N_{\Q_{T,n}}(d\xi)\\
  \\
  \ds+\int_{H^{2n}} \theta_\epsilon(\Gamma_n(\xi+d(x,\cdot)))\varphi(\xi_{2^n})\,(D_h\exp\left\{-\tfrac12\,F(x)+G^{\,n}(x,\xi)\right\}\cdot   d_x(x,\cdot )y))) \,  N_{\Q_{T,n}}(d\xi)\\
  \\
  \ds+\int_{H^{2^n}}(D_h \theta_\epsilon(\Gamma_n(\xi+d(x,\cdot))\cdot   (d_x(x,\cdot )y))\varphi(\xi_{2^n})\exp\left\{-\tfrac12\,F(x)+G^{\,n}(x,\xi)\right\}\ \,  N_{\Q_{T,n}}(d\xi).
  \end{array}
   \end{equation}
   Taking into account that the first integral vanishes, because $ d_x(x,T)y=0$, and that  
$$\begin{array}{lll}
\langle D\theta_\epsilon(\Gamma_na(\xi+d(x,\cdot))),(d(x,\cdot ))\rangle_{H^{2^n}}&=&\theta_\epsilon'(\Gamma_n(\xi+d(x,\cdot)))\,\langle  \Gamma'_n(\xi+d(x,\cdot)),d(x,\cdot )y\rangle_{H^{2^n}} \\
\\
&=&\frac1{2\epsilon}\,\langle  \Gamma'_n(\xi+d(x,\cdot)),d_x(x,\cdot )y\rangle_{H^{2^n}}{\mathds 1}_{[r-\epsilon.\,r+\epsilon]},
\end{array}$$
 we deduce  by \eqref{e55}, letting $\epsilon\to 0$, that 
  \begin{equation}
 \label{e56}
  \begin{array}{l}
  \ds M_2(n,x,y)= \int_{\{\Gamma_n(\xi+d(x,\cdot))\le r\}}  \varphi(\xi_{2^n})\,(D_h\exp\left\{-\tfrac12\,F(x)+G^{\,n}(x,\xi)\right\}\cdot   d_x(x,\cdot )y)) \,  N_{\Q_{T,n}}(d\xi)\\
  \\
  \ds+\lim_{\epsilon\to 0}\,\frac1{2\epsilon}\int_{\{r-\epsilon \le \Gamma_n(\xi+d(x,\cdot))\le r+\epsilon\}} \langle  \Gamma'_n(\xi+d(x,\cdot)),d(x,\cdot )y\rangle_{H^{2^n}}\varphi(\xi_{2^n})\exp\left\{-\tfrac12\,F(x)+G^{\,n}(x,\xi)\right\}\ \,  N_{\Q_{T,n}}(d\xi).
  \end{array}
   \end{equation}

 So the conclusion of the lemma follows, by the change of variables \eqref{e24},  when $\varphi \in C^1(\overline{\mathcal O_r}).$ The  case  when $\varphi\in  C(\overline{\mathcal O_r})$ can be handled by a uniform approximation of $\varphi$ by $C^1(\overline{\mathcal O_r})$ functions. Finally, if $\varphi\in  B_b(\overline{\mathcal O_r})$ we conclude using the strong Feller property of the semigroup, see Proposition \ref{p1}.
 \end{proof}
 
 It remains to compute the limit in \eqref{e45}. This we will do using the Ehrhard inequality.
 
\subsection{Applying the Ehrhard inequality}

Define
\begin{equation}
\label{e57}
\Lambda_{x}(s):= N_{\Q_{T}}(\Gamma(h+d(x,\cdot))\le s),\qquad 
 \Lambda_{n,x}(s):= N_{\Q_{T}}(\Gamma_n(\xi+d(x,\cdot))\le s),\quad \forall\,s>0, \,x\in \overline{\mathcal O_r},\,n\in\N.
\end{equation}
Since $g$ is convex by Hypothesis \ref{h2}(i),
 the mapping $\Gamma(\cdot+d(x,\cdot))$ (resp.
$\Gamma_{n}(\cdot+d(x,\cdot))$) is convex as well.
By applying the  Ehrhard  inequality (see e.g.    \cite[Th. 4.4.1]{Bo98}) we see that for any $x\in \overline{\mathcal O_r}$ the real function 
$$
[0,+\infty)\to\R,\; s\to S_{x}(s):=\Phi^{-1}(\Lambda_{x}(s)), (\mbox{\rm resp.}\, [0,+\infty)\to\R,\; s\to S_{n,x}(s):=\Phi^{-1}(\Lambda_{n,x}(s))),
$$
where $$\Phi(z)=\frac1{\sqrt{2\pi}}\int_{-\infty}^z e^{-\frac12 v^2}\,dv,\quad z\in \R,$$
is concave. Note that $\Phi^{-1}: (0,1)\to (-\infty,+\infty).$
As a consequence,  $\Lambda_{x}(\cdot)$ (resp. $\Lambda_{n,x}(\cdot)$)
is
  differentiable at any $s>0$ 
up to a discrete set  $N_s$ where there exist the left and the right derivative; we shall 
denote by  $D^+_r\Lambda_{x}(s)$  (resp $D^+_r\Lambda_{n,x}(s)$)  the right derivatives at any discontinuity point, and also (with the same symbol)   the derivative at the other points.

It follows  that $N_{\Q_{T}}\circ (\Gamma(h+d(x,\cdot)))^{-1}$ (resp.  $N_{\Q_{T}}\circ (\Gamma_n(h+d(x,\cdot)))^{-1}$) is absolutely continuous with respect to the Lebesgue measure  $\ell$
 and it results
\begin{equation}
\label{e601}
\frac{dN_{\Q_{T}}\circ (\Gamma(h+d(x,\cdot)))^{-1}}{d\ell}(s)=D^+_r\Lambda_{x}(s), \qquad (\mbox{\rm resp.}\;\frac{dN_{\Q_{T}}\circ (\Gamma_{n}(\xi+d(x,\cdot)))^{-1}}{d\ell}(s)=D^+_r\Lambda_{n,x}(s))
,\;s>0.
\end{equation}

Note  that for any $x$, $ \Lambda_{n,x}(s)$ is increasing on $s$ and decreasing on $n$. Moreover, $\Lambda_{n,x}(0)=0$ and $ \Lambda_{n,x}(s)\uparrow 1$ as $s\to\infty$.
Also \begin{equation}
\label{e601z}D^+S_x(s)=\sqrt{2\pi} \,e^{\frac12\,S^2(s)}\,D^+ \Lambda_x(s)\end{equation}\medskip

Now we are going  to   estimate  $D^+_r \Lambda_{n,x}(s)$ independently of $n$,  $x$ and $s\in [r/2,3r/2]$. Then we shall show that
$D^+_r \Lambda_{n,x}\to D^+_r \Lambda_{x}$.
\begin{Lemma}
\label{l13s}
There exists $K_r>0$ independent of $x,\,n,\,s$ such that
 \begin{equation}
 \label{e60}
 D^+_r \Lambda_{n,x}(s)\le K_r,\quad\forall\, x\in \overline{\mathcal O_r},\,\forall\,n\in\N,\, \forall\,s\in [r/2,3r/2].
 \end{equation}
 Moreover, it results
\begin{equation}
 \label{e60l}
 \lim_{n\to\infty}D^+_r \Lambda_{n,x}=D^+_r \Lambda_{x},\quad \forall\,x\in \overline{\mathcal O_r}.
 \end{equation}
\end{Lemma}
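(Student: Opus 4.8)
The whole statement rests on the Ehrhard inequality that has just been invoked. Since $g$ is convex, $\Gamma_n(\cdot+d(x,\cdot))$ is convex, so $S_{n,x}:=\Phi^{-1}\circ\Lambda_{n,x}$ is concave (and non--decreasing) on $(0,+\infty)$; hence its right derivative $D^+_r S_{n,x}$ exists everywhere, is non--increasing in $s$ and is $\ge 0$, and by \eqref{e601z} one has $D^+_r\Lambda_{n,x}(s)=\tfrac{1}{\sqrt{2\pi}}\,e^{-\frac12 S_{n,x}(s)^2}\,D^+_r S_{n,x}(s)$. The plan is therefore to pin down $S_{n,x}$ from above and from below at two levels straddling $[r/2,3r/2]$, uniformly in $n$ and $x$, and then read \eqref{e60} off from concavity; the convergence \eqref{e60l} will follow from a monotone passage to the limit combined with Helly's selection principle.

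For the upper bound note that $t_{2^n}=T$ and $d(x,T)=0$ by Proposition \ref{p5c}, so $\Gamma_n(\xi+d(x,\cdot))\ge g(\xi(T))$ and hence $\Lambda_{n,x}(3r/2)\le N_{\Q_T}\big(g(\xi(T))\le 3r/2\big)=N_{Q_T}(\{g\le 3r/2\})\le 1-\delta_r$, where $\delta_r:=N_{Q_T}(\{g>3r/2\})>0$ because $Q_T$ is non--singular (Hypothesis \ref{h1}) and $\{g>3r/2\}$ is open and non--empty (a convex $g$ with one bounded sublevel set has all sublevel sets bounded). Thus $S_{n,x}(s)\le S_{n,x}(3r/2)\le\Phi^{-1}(1-\delta_r)=:C_r$ for every $s\le 3r/2$, $n\in\N$, $x\in\overline{\mathcal O_r}$. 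For the lower bound I use the comparison $\Gamma_n(\xi+d(x,\cdot))\le\Gamma(\xi+d(x,\cdot))$ (a dyadic supremum is dominated by the full supremum), so $\Lambda_{n,x}\ge\Lambda_x$; by \eqref{e25}--\eqref{e31e} and boundedness of $\overline{\mathcal O_r}$ the family $\{d(x,\cdot):x\in\overline{\mathcal O_r}\}$ is bounded in $E$, say $|d(x,\cdot)|_E\le c_0$, whence $\Gamma(\xi+d(x,\cdot))\le\omega(|\xi|_E)$ with $\omega(\rho):=\sup\{g(z):|z|_H\le\rho+c_0\}<\infty$; choosing $\rho_0\in(0,r/2)$ and $\rho_1>0$ with $\omega(\rho_1)\le\rho_0$ gives $\Lambda_x(\rho_0)\ge N_{\Q_T}(|\xi|_E\le\rho_1)=:\delta_r'>0$, hence $S_{n,x}(\rho_0)\ge\Phi^{-1}(\delta_r')=:-C_r'$, uniformly in $n,x$.

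Combining, for $s\in[r/2,3r/2]$, using $e^{-\frac12 S_{n,x}(s)^2}\le 1$, monotonicity of $D^+_r S_{n,x}$, and concavity between the levels $\rho_0$ and $r/2$,
\[
D^+_r\Lambda_{n,x}(s)\le\frac{1}{\sqrt{2\pi}}\,D^+_r S_{n,x}(r/2)\le\frac{1}{\sqrt{2\pi}}\,\frac{S_{n,x}(r/2)-S_{n,x}(\rho_0)}{r/2-\rho_0}\le\frac{1}{\sqrt{2\pi}}\,\frac{C_r+C_r'}{r/2-\rho_0}=:K_r,
\]
which is \eqref{e60}. For \eqref{e60l} one observes that the partitions are dyadically refined, so $\Gamma_n(\xi+d(x,\cdot))\uparrow\Gamma(\xi+d(x,\cdot))$ (continuity of $t\mapsto g(\xi(t)+d(x,t))$ and density of the dyadic points), hence $\Lambda_{n,x}(s)\downarrow\Lambda_x(s)$ and $S_{n,x}\downarrow S_x:=\Phi^{-1}\circ\Lambda_x$ pointwise, all concave. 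The $D^+_r S_{n,x}$ are non--increasing in $s$ and uniformly bounded on $[r/2,3r/2]$ by the previous step, so by Helly's principle a subsequence converges pointwise; since $S_{n,x}\to S_x$ with all functions concave, the (integrated) limit is forced to agree with $D^+_r S_x$ off the countable set where the latter jumps, so in fact the whole sequence $D^+_r S_{n,x}(s)\to D^+_r S_x(s)$ there, and inserting this together with $S_{n,x}(s)\to S_x(s)$ into \eqref{e601z} gives $D^+_r\Lambda_{n,x}(s)\to D^+_r\Lambda_x(s)$ off a countable set, i.e. $\ell$--a.e.; $\Lambda_x$ inherits the Ehrhard regularity, so $D^+_r\Lambda_x$ is well defined.

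I expect the delicate point to be the uniform--in--$x$ lower bound for $\Lambda_{n,x}$ strictly below level $r/2$: the terminal--time inequality supplies the upper bound essentially for free, but the lower bound must be routed through $\Gamma_n\le\Gamma$ and a small--ball estimate for $N_{\Q_T}$ that is uniform over the compact family of drifts $\{d(x,\cdot):x\in\overline{\mathcal O_r}\}$, which rests on the a priori bounds \eqref{e25}--\eqref{e31e} and Hypothesis \ref{h2}(ii). Once this and the bound $K_r$ are secured, the Helly extraction, the identification of the limit, and the subsequent passage to the limit in the surface term of \eqref{e45} are all routine.
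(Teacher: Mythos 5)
Your architecture is the same as the paper's: Ehrhard concavity of $S_{n,x}=\Phi^{-1}\circ\Lambda_{n,x}$, a two--sided bound on $S_{n,x}$ near the level $r$ that is uniform in $n$ and $x$, the concavity chord estimate to get \eqref{e60}, and Helly's selection principle plus identification of the limit derivative for \eqref{e60l}. Your upper bound (via $\Gamma_n(\xi+d(x,\cdot))\ge g(\xi(T))$, $d(x,T)=0$, and the non--degeneracy of $Q_T$) is correct and in fact cleaner than the paper's Step 2, and your limiting argument parallels the paper's Step 3.

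The genuine gap is the uniform lower bound, i.e.\ exactly the point you yourself flagged as delicate. You route it through $\Lambda_{n,x}\ge\Lambda_x$ and then claim $\Lambda_x(\rho_0)\ge N_{\Q_T}(|\xi|_E\le\rho_1)$ for some $\rho_1>0$ with $\omega(\rho_1)=\sup\{g(z):|z|_H\le\rho_1+c_0\}\le\rho_0<r/2$. No such $\rho_1$ exists: since $K(0,\cdot)\equiv0$ one has $a(x,0)=0$, hence $d(x,0)=x$, so $c_0\ge\sup_{x\in\overline{\mathcal O_r}}|x|_H$ and therefore $\omega(\rho_1)\ge\sup_{\overline{\mathcal O_r}}g=r>\rho_0$ for every $\rho_1>0$. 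Worse, the quantity you want to bound below is actually zero for boundary points: under $N_{\Q_T}$ one has $h(0)=0$ a.s., so $\Gamma(h+d(x,\cdot))\ge g(d(x,0))=g(x)$ a.s., and thus $\Lambda_x(\rho_0)=0$ whenever $g(x)>\rho_0$; no small--ball estimate centered at the origin, and more generally no argument passing through $\Lambda_x$ at levels below $g(x)$, can give a uniform positive bound. The positivity must come from mass near the shifted center $-d(x,\cdot)$, which is why the paper works directly with the finite--dimensional laws: the set $\{\xi:\Gamma_n(\xi+d(x,\cdot))<s\}$ is open, convex and non--empty (it contains $\xi_j=-d(x,t_j)$), and $N_{\Q_{T,n}}$ is non--degenerate by Proposition \ref{p5}, hence of full support, so this set has positive measure, with uniformity in $x$ obtained from compactness of $\overline{\mathcal O_r}$; note that the infinite--dimensional measure $N_{\Q_T}$ is degenerate and $-d(x,\cdot)$ is not in its support (because $e^{\cdot A}x$ is not), so precisely the passage from $\Lambda_{n,x}$ to $\Lambda_x$ loses what makes the bound possible. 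Until a lower bound of this type is supplied, your chord estimate for $D^+_rS_{n,x}(r/2)$, and with it \eqref{e60} and the equiboundedness needed for the Helly step, is unsupported.
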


 \begin{proof}
 We proceed in three steps.\medskip
  
 {\bf Step 1}.  There is $l_1>0$ such that
 \begin{equation}
\label{e61}
0<l_1\le\Lambda_{n,x}(s),\quad \forall\;x\in \overline{\mathcal O_r},\,\forall\,n\ge 2, \forall\,s\in [r/2,3r/2].
\end{equation}

 It is enough to show \eqref{e61} for $\Lambda_{2,x}(s)$, because $\Lambda_{2,x}(s)\ge \Lambda_{n,x}(s)$  for $n\ge 2$.
 
Since  the convex set  $ \{\xi\in H^{2^n}:\,\Gamma_{2} (\xi+d(x,\cdot))< s\}$   is open and non empty  and the measure $N_{\Q_{T,2}}$ is non degenerate by Proposition \ref{p5}, it follows that there is $l_1(x)$ such that
$$
0<l_1(x)\le \Lambda_{n,x}(s),\quad \forall\;x\in \overline{\mathcal O_r},\,\forall\,n\ge 2, \forall\,s\in [r/2,3r/2].
$$
Since $\overline{\mathcal O_r}$ is compact, the conclusion follows.\medskip

 {\bf Step 2}.  There is $l_2<1$ such that
 \begin{equation}
\label{e62}
\Lambda_{n,x}(s)<l_2<1,\quad \forall\;x\in \overline{\mathcal O_r},\,\forall\,n\in\N,  \forall\,s\in [r/2,3r/2].
\end{equation}

In fact, thanks to Hypothesis \ref{h2}(ii),
there exists $M>0$ such that
\begin{equation}
\label{e63}
\Lambda_{n,x}(s)\le M,\quad \forall\;x\in \overline{\mathcal O_r},\,\forall\,n\in\N,  \forall\,s\in [r/2,3r/2].
\end{equation}

 {\bf Step 3}.  Conclusion.\medskip

 Note first that
 $$
 S_{n,x}(s)\downarrow S_{x}(s)\quad \mbox{\rm as}\;n\to\infty,\; x\in \overline{\mathcal O_r}. 
$$
The sequence  $(S_{n,x}(\cdot))$ is obviously increasing   
and  also  concave   by the Ehrhard inequality. Therefore,  all elements of  $(S'_{n,x}(\cdot))$ are positive and decreasing; so, they are BV  in the interval  $[r/2,3r/2].$

We claim  that the sequence  $(S'_{n,x}(r))$    is equi-bounded in $[r/2,3r/2]$  in $BV$ norm.
To show this fact it is enough to see that $(S'_{n,x}(r))$ is namely equi-bounded at $r_1$ (because  it is  decreasing). In fact, since $S_{n,x}$ is  concave we have if $0<\epsilon\le \tfrac{r}2$,
\begin{equation}
\label{e4.4a}
S'_{n,x}(r_1)\le \tfrac1\epsilon(S_{n,x}(r_1+\epsilon)-S_{n,x}(r_1)\le \tfrac2\epsilon\,S_{n,x}(r_2)= \tfrac2\epsilon\,\Phi^{-1}(r_2).
\end{equation} 
 Therefore we can apply the selection principle of Helly, see e.g. \cite[Theorem 5 page 372]{KoFo70} to the sequence   $(S'_{n,x}(\cdot))$ and conclude that there exists a subsequence of $(S'_{n,x}(\cdot))$ still   denoted by   $(S'_{n,x}(\cdot))$  that converges in all points of $[r/2,3r/2]$ to a function   $f(x,\cdot)$. 
 
 We claim that $f(x,s)$ is the derivative  of $S_{x}(s)$ in 
$[r/2,3r/2].$  This follows by an elementary
 argument writing,
  $$
 S_{n,x}(s)=\int_{r_1}^{s} S'_{n,x}(v)dv ,\quad s\in [r/2,3r/2].
  $$
  (recall that $S_{n,x}(\cdot)$ is absolutely continuous by \cite[Corollary  4.4.2]{Bo98}).
By the Dominated Convergence theorem it follows that for 
  $k\to \infty$ we have
  $$
  S_{x}(s)=\int_{r_1}^sS'_{x}(v)dv ,\quad s\in [r/2,3r/2],
  $$
 which implies
 $$
  S'_{x}(s)=f(x,s) ,\quad s\in[r/2,3r/2],
  $$
  as required.
  
 Therefore there is  a subsequence of   $(S'_{n,x})$  which converges to  $S'_x$ and consequently all the sequence   $(S'_{n,x})$ will converges to  $S'_x$ Thus  $\Lambda_x(r)$  has the right derivative for  $r\in [r/2,3r/2]$, $x\in 
\overline{\mathcal O_r},$  and \eqref{e60l} follows. 
  
  Finally, taking into account  \eqref{e4.4a} and \eqref{e601z},  it results
$$
D^+_r \Lambda_x(r)=\frac1{\sqrt{2\pi}}\,e^{-\frac12\,S_x^2(r)}\,S'_x(r)=\le   \tfrac1{\pi\epsilon}\,\Phi^{-1}(r_2).$$
So, \eqref{e60} follows.
\end{proof}

   The next lemma is devoted to the  computation   of  $\lim_{\epsilon\to 0}\,M_{2,2}(n,x,y)$, defined by \eqref{e45}.
\begin{Lemma}
\label{l16b}
Let   $n\in\N,$   $r>0$,
  $x\in\overline{\mathcal O_r}, y\in H$. Then it results
 \begin{equation}
\label{e64}
 M_{2,2}(n,x,y)=\E_{N_{\Q_{T}}}[\varphi(h(T))\,(\Gamma'_{n}(h+d(x,\cdot))\cdot(d_x(x,\cdot )y)|\Gamma_{n}(h+d(x,\cdot))=r]\,D^+_{n,r}\Lambda_{x}(r).
\end{equation}
  \end{Lemma}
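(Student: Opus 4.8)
The identity \eqref{e64} should be read as a disintegration (coarea–type) formula: the right derivative $D^+_r\Lambda_{n,x}(r)$ is the density, at the level $r$, of the push–forward of $N_{\Q_T}$ under the continuous map $\gamma(h):=\Gamma_n(h+d(x,\cdot))$, and the conditional expectation is the integral of the integrand against the regular conditional probability carried by the level set $\{\gamma=r\}$. The plan is therefore to make this precise by combining the disintegration theorem with a Lebesgue differentiation argument, using the boundedness of the density supplied by Lemma \ref{l13s}.

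Concretely, I would first set $\Psi(h):=\varphi(h(T))\,\big(\Gamma'_n(h+d(x,\cdot))\cdot(d_x(x,\cdot)y)\big)$ and check that $\Psi\in L^p(E,N_{\Q_T})$ for every $p\ge1$: $\varphi$ is bounded, $d_x(x,\cdot)y$ is a fixed element of $E$ (since $d(x,\cdot)$ is linear in $x$ with $|d(x,\cdot)|_E\le c|x|_H$ by \eqref{e25}), and by Lemma \ref{l1L} the local Lipschitz character of $\Gamma_n$ gives $|\Gamma'_n(h+d(x,\cdot))\cdot(d_x(x,\cdot)y)|\le(a+be^{2|h|_E})\,|d_x(x,\cdot)y|_E$, so integrability follows from the exponential integrability of $|h|_E$ under $N_{\Q_T}$ already used in Proposition \ref{p1}. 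Since $E$ is Polish and $\gamma$ is continuous, the disintegration theorem then yields a Borel family $(\nu^s)_{s>0}$ of probabilities, with $\nu^s$ concentrated on $\gamma^{-1}(s)$, such that
$$\int_E\Phi\,dN_{\Q_T}=\int_0^\infty\Big(\int_E\Phi\,d\nu^s\Big)\,d\big(N_{\Q_T}\circ\gamma^{-1}\big)(s)\qquad\text{for all }\Phi\in L^1(E,N_{\Q_T}).$$
By \eqref{e601} the measure $N_{\Q_T}\circ\gamma^{-1}$ is absolutely continuous with density $D^+_r\Lambda_{n,x}(\cdot)$, which is bounded on $[r/2,3r/2]$ by Lemma \ref{l13s}. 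Writing $\phi(s):=\int_E\Psi\,d\nu^s=\E_{N_{\Q_T}}[\Psi\mid\gamma=s]$ and choosing $\Phi=\Psi\,\one_{\{r-\epsilon\le\gamma\le r+\epsilon\}}$, this gives
$$\frac1{2\epsilon}\int_{\{r-\epsilon\le\gamma\le r+\epsilon\}}\Psi\,dN_{\Q_T}=\frac1{2\epsilon}\int_{r-\epsilon}^{r+\epsilon}\phi(s)\,D^+_r\Lambda_{n,x}(s)\,ds.$$

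It then remains to let $\epsilon\to0$. Since $s\mapsto\phi(s)\,D^+_r\Lambda_{n,x}(s)$ is locally integrable, Lebesgue's differentiation theorem yields the limit $\phi(r)\,D^+_r\Lambda_{n,x}(r)$ at every Lebesgue point, which is exactly the right–hand side of \eqref{e64} (note $D^+_r\Lambda_{n,x}=D^+_{n,r}\Lambda_x$). To reach the value at the prescribed level $r$ rather than only almost everywhere, I would exploit the Ehrhard–regularity established in Lemma \ref{l13s}: $D^+_r\Lambda_{n,x}$ is right–continuous and $\Lambda_{n,x}$ is differentiable off an at most countable set, while a right–continuous version of $s\mapsto\phi(s)$ near $r$ is obtained by first reducing to $\varphi\in C(\overline{\mathcal O_r})$ by uniform approximation (as in Lemma \ref{l12}) and then to $\varphi\in B_b(\overline{\mathcal O_r})$ via the strong Feller property of Proposition \ref{p1}; for continuous $\varphi$ the integrand $\Psi$ is continuous on $E$ off an $N_{\Q_T}$–null set and $\{\gamma=r\}$ is $N_{\Q_T}$–null, so the incremental ratio converges to the stated right–derivative version.

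The main obstacle is precisely this last point: upgrading the Lebesgue–point limit to the value at the specific $r$. The bound \eqref{e60} of Lemma \ref{l13s} is the crucial ingredient that prevents the density of $N_{\Q_T}\circ\gamma^{-1}$ from concentrating near $r$ and allows the $\epsilon\to0$ limit to be exchanged with integration against $\nu^s$; the residual subtlety — joint regularity in $s$ of $\phi(s)$ and $D^+_r\Lambda_{n,x}(s)$ — is absorbed by the concavity of $S_{n,x}$ together with the approximation reductions on $\varphi$ already carried out in the proof of Lemma \ref{l12}.
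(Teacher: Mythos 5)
Your argument is correct and follows essentially the same route as the paper: you disintegrate $N_{\Q_T}$ along the level sets of $\gamma=\Gamma_n(\cdot+d(x,\cdot))$ via a regular conditional distribution (available since $E$ is separable/Polish), invoke the absolute continuity \eqref{e601} with the bounded density $D^+_r\Lambda_{n,x}$ furnished by the Ehrhard argument of Lemma \ref{l13s}, and then pass to the limit $\epsilon\to 0$ in the averaged one-dimensional integral. The only difference is one of explicitness: you flag and sketch a treatment of the subtlety that the shrinking-interval limit gives the value at the specific level $r$ (not merely at Lebesgue points), a point the paper dispatches with a brief appeal to dominated convergence, so your write-up matches and, at that step, slightly elaborates the published proof.
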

  
\begin{proof}

Let us recall  that by \eqref{e45} we have,  
 $$
 \begin{array}{l}
 \ds M_{2,2}(n,x,y)=\lim_{\epsilon\to 0}\frac1{2\epsilon}\int_{\{r-\epsilon\le\Gamma_{n}(h+d(x,\cdot)) \le r+\epsilon\}} \varphi(h(T))(\Gamma'_{n}(h+d(x,\cdot))\cdot (d_x(x,\cdot )y) N_{\Q_{T}}(dh),
 \end{array} 
 $$
Taking into account \eqref{e601} it follows that   
   $$
  \begin{array}{l}
 \ds M_{2,2}(n,x,y)=\lim_{\epsilon\to 0}\frac1{2\epsilon}\,\int_{r-\epsilon}^{r+\epsilon}\E_{N_{\Q_{T}}}[  \varphi(h(T))(\Gamma'_n(h+d(x,\cdot))\cdot (d_x(x,\cdot )y)|\Gamma_n(h+d(x,\cdot))=s]\,D_{n,r}\Lambda_{x}(s)ds.
 \end{array}
 $$ 
 Note that the existence of a regular distribution of
 $$
 \E_{N_{\Q_{T}}}[  \varphi(h(T))(\Gamma'_n(h+d(x,\cdot))\cdot (d_x(x,\cdot )y)|\Gamma_n(h+d(x,\cdot))=s]
 $$
 is granted because $E$ is separable, see \cite[10.2.2]{Du02}.   
 
 It follows that
 $$
  \begin{array}{l}
 \ds M_{2,2}(n,x,y)=\E_{N_{\Q_{T}}}  \varphi(h(T))(\Gamma'_{n}(h+d(x,\cdot))\cdot (d_x(x,\cdot )y)|\Gamma_{n}(h+d(x,\cdot))=r]\,D^+_{n,r}\Lambda_{x}(r),
 \end{array}
 $$ 
 by virtue of the dominated convergence theorem.
 \end{proof}

  We prove now the following result.
  \begin{Proposition}
  \label{p13}
  Assume Hypotheses \ref{h1} and \ref{h2}  and let $n\in\N$. Then we have
 \begin{equation}
 \label{e68h}
 \begin{array}{lll}
D_xR^\mathcal O_{T,n}\varphi(x)\cdot y&=&\ds
\hspace{-5mm} \int\limits_{\{\Gamma_n(h+d(x,\cdot))\le r\}}\hspace{-5mm}\varphi(h(T))\exp\left\{-\tfrac12\,F(x)+G^{n}(x,h)\right\} \left(-\tfrac12\,F_x(x)y+G_x^{n}(x,h)y\right) \,  N_{\Q_{T }}(dh)\\
\\
 &&+\ds
 \E_{N_{\Q_T}}[\varphi(h(T))\,(\Gamma'_{n}(h+d(x,\cdot))\cdot(d_x(x,\cdot )y)|\Gamma_{n}( h+d(x,\cdot))=r]\,D^+_{n,r}\Lambda_{x}(r).
 \end{array}
 \end{equation}
 Moreover, there is $c_{2,T}(r)>0$ such that the following estimate holds
  \begin{equation}
 \label{e68i}
 \begin{array}{l}
\ds |D_xR^\mathcal O_{T,n}\varphi(x)|\le \|\varphi\|_\infty \,c_{2,T}(r)+\|\varphi\|_\infty\,\exp\left\{-\tfrac12\,F(x)\right\} 
\\
\\
\ds\hspace{20mm}\times
 \int_{\{\Gamma_{n}(h+d(x,\cdot))\le r\}} \exp\left\{c_T\,|h|_E\right\} \left(\tfrac12\,\|F_x(x)\|_{\mathcal L(H)}+Tc_{2,T}(r)  \,\|U^{-1}\|_{\mathcal L(H)}\,\|h\|_E\right) \,  N_{\Q_{T }}(dh).\\
\\
 \end{array}
 \end{equation}
 \end{Proposition}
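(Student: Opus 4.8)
The plan is to assemble the three preceding lemmas and then carry out the necessary estimates. First I would combine Lemma \ref{l7}, which gives $D_xR^{\mathcal O_r}_{T,n}\varphi(x)\cdot y=M_1(n,x,y)+M_2(n,x,y)$, with Lemma \ref{l12}, which splits $M_2=M_{2,1}+M_{2,2}$. The decisive point is an exact cancellation: since $F(x)$ does not depend on $h$ while $G^{\,n}(x,h)$ does, the $h$--derivative of $\exp\{-\tfrac12 F(x)+G^{\,n}(x,h)\}$ equals $\exp\{-\tfrac12 F(x)+G^{\,n}(x,h)\}\,G^{\,n}_h(x,h)$, so that the integrand of $M_{2,1}$ in \eqref{e45}, namely $\varphi(h(T))\exp\{-\tfrac12 F(x)+G^{\,n}(x,h)\}\bigl(G^{\,n}_h(x,h)\cdot(d_x(x,\cdot)y)\bigr)$, is precisely the term $+\,G^{\,n}_h(x,h)\cdot(d_x(x,\cdot)y)$ that is subtracted inside the integrand of $M_1$ in \eqref{e41a}. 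Hence $M_1+M_{2,1}$ collapses to the first (volume) integral in \eqref{e68h}, while rewriting $M_{2,2}$ via Lemma \ref{l16b} produces the second term; this establishes \eqref{e68h}.

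Next I would prove the estimate \eqref{e68i} by bounding the two terms of \eqref{e68h} separately. For the volume term I would use $F(x)=|\Q_T^{-1/2}a(x,\cdot)|_X^2\ge0$ (so that only $\exp\{-\tfrac12 F(x)\}$ survives as a prefactor), the uniform bound $\exp\{G^{\,n}(x,h)\}\le\exp\{c_T|h|_E\}$ from \eqref{e31b}, the trivial estimate $|F_x(x)y|\le\|F_x(x)\|_{\mathcal L(H)}|y|$, and the fact that, since $u_x(x,t)y=e^{(T-t)A^*}U^{-1}e^{TA}y$, formula \eqref{e31h} gives $|G^{\,n}_x(x,h)y|\le T\,c_{2,T}(r)\,\|U^{-1}\|_{\mathcal L(H)}\,\|h\|_E\,|y|$, where the increments $t_j-t_{j-1}$ sum to $T$ and $\sup_{s\in[0,T]}\|e^{sA}\|^2_{\mathcal L(H)}$ is absorbed into $c_{2,T}(r)$. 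Taking absolute values under the integral and the supremum over $|y|_H\le1$ then yields the integral part of \eqref{e68i}.

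For the surface term $M_{2,2}$ I would invoke Lemma \ref{l13s}, which provides a constant $K_r$, uniform in $n$ and in $x\in\overline{\mathcal O_r}$, with $D^+_{n,r}\Lambda_x(r)\le K_r$; it then remains only to bound the conditional expectation. Here the key observation is that $\Gamma_n(h+d(x,\cdot))$, and hence $\Gamma'_n(h+d(x,\cdot))\cdot(d_x(x,\cdot)y)$, depends on $h$ only through the finitely many values $(h(t_j))_{j=1}^{2^n}$, and that on the level set $\{\Gamma_n(h+d(x,\cdot))=r\}$ one has $g(h(t_j)+d(x,t_j))\le r$ for every $j$, so each $h(t_j)+d(x,t_j)$ lies in the compact set $\overline{\mathcal O_r}$; consequently, applying Hypothesis \ref{h2}(ii) at these bounded arguments together with $|d_x(x,t)y|\le c_{1,T}|y|$ (the derivative analogue of Proposition \ref{p5c}), I get $|\Gamma'_n(h+d(x,\cdot))\cdot(d_x(x,\cdot)y)|\le C_r|y|$ on that set, with $C_r$ independent of $n$ and $x$. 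Hence $|M_{2,2}(n,x,y)|\le\|\varphi\|_\infty C_rK_r|y|$, which after renaming $C_rK_r$ as $c_{2,T}(r)$ and taking the supremum over $|y|_H\le1$ is the first term of \eqref{e68i}; adding the two bounds completes the proof.

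I expect the main obstacle to be the control of the surface term $M_{2,2}$ uniformly in $n$: this relies both on the Ehrhard--inequality estimate of Lemma \ref{l13s} for the density $D^+_{n,r}\Lambda_x(r)$ and on the observation that the conditioning is performed on a level set where the a priori exponentially growing bound of Lemma \ref{l1L}(ii) becomes harmless, since there the relevant arguments $h(t_j)+d(x,t_j)$ are confined to the bounded set $\overline{\mathcal O_r}$. Everything else is routine bookkeeping of constants.
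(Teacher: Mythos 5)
Your proposal is correct and follows essentially the same route as the paper: assemble Lemmas \ref{l7}, \ref{l12} and \ref{l16b}, observe that $D_h\exp\{-\tfrac12 F(x)+G^{\,n}(x,h)\}\cdot(d_x(x,\cdot)y)=\exp\{-\tfrac12 F(x)+G^{\,n}(x,h)\}\,G^{\,n}_h(x,h)\cdot(d_x(x,\cdot)y)$ so that $M_{2,1}$ cancels the $-G^{\,n}_h$ term in $M_1$, and then estimate the volume term via \eqref{e31b} and the bound on $G^{\,n}_x$, and the surface term via the uniform bound $K_r$ of Lemma \ref{l13s}. Your justification that the conditional expectation is uniformly bounded (because on the level set $\{\Gamma_n(h+d(x,\cdot))=r\}$ the points $h(t_j)+d(x,t_j)$ stay in $\overline{\mathcal O_r}$, making the exponential bound of Hypothesis \ref{h2}(ii) harmless) is in fact a slightly more careful version of the paper's corresponding step and is a welcome clarification.
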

 \begin{proof}

 From  Lemmas \ref{l7}, \ref{l12} and \ref{l16b} we obtain
 $$
 D_xR^\mathcal O_{T,n}\varphi(x)\cdot y=M_1(n,x,y)+  M_{2,1}(n,x,y)+M_{2,2}(n,x,y)
 $$
 and so,
 $$
 \begin{array}{l}
\ds D_xR^\mathcal O_{T,n}\varphi(x)\cdot y=
 \int_{\{\Gamma_n(h+d(x,\cdot))\le r\}} \varphi(h(T))\exp\left\{-\tfrac12\,F(x)+G^{\,n}(x,h)\right\}\\
\\
\ds\times\left(-\tfrac12\,F_x(x)y+G_x^{\,n}(x,h)y-(G^{\,n}_h(x,h)\cdot (d_x(x,\cdot)y) \right) \,  N_{\Q_{T }}(dh)\\
\\
+\ds \int_{\{\Gamma_n( h+d(x,\cdot)) \le r\}} \varphi(h(T))(D_h\exp\left\{-\tfrac12\,F(x)+G^{\,n}(x,h)\right\}\cdot (d_x(x,\cdot )y))  N_{\Q_{T}}(dh) \\
 \\
 +\ds
 \E[\varphi(h(T))\,(\Gamma'_{n}(h+d(x,\cdot))\cdot(d_x(x,\cdot )y)|\Gamma_n( h+d(x,\cdot))=r]\,D^+_r\Lambda_{n,x}(r).
 \end{array}
$$
 Since
 $$
 D_h\exp\left\{G^{\,n}(x,h)\right\}\cdot (d_x(x,\cdot )y) =\exp\{G^{\,n}(x,h)\}\, G_h^{\,n}(d_x(x,\cdot )y\cdot (d_x(x,\cdot )y), 
 $$
 we obtain letting $n\to\infty$, after some simplifications,  identity \eqref{e68h}.    
 Finally, we prove \eqref{e68i}. First by \eqref{e31b}  we have
  $$
 \exp\left\{G^{\,n}(x,h)\right\}\le  \exp\left\{c_T\,|h|_E\right\}. 
$$
 Moreover by \eqref{e31c}
  it follows that 
  $$G_x^{\,n}(x,h)= \sum_{j=1}^{2^n} u_x(x,t_j)\cdot h(t_j)\,(t_j-t_{j-1}),\quad x\in \overline{\mathcal O_r},\;h\in E$$
and  therefore we have
   \begin{equation}
 \label{e68l}
|G_x^{\,n}(x,h)|\le   T\|u_x(x,\cdot\|_{\mathcal L(H)}\,\|h\|_E\le Tc^2_T\,\|U^{-1}\|_{\mathcal L(H)}\,\|h\|_E,\quad x\in \overline{\mathcal O_r},\;h\in E
 \end{equation}
 Finally, by Hypothesis \ref{h2}(ii)  there exists $c_{1_T}>0$
 such that
 $$
| \Gamma'_{n}(h+d(x,\cdot))\cdot(d_x(x,\cdot )y)|
\le c_{1,T}\,|h|_E.
 $$
 Finally,  taking into account \eqref{e60} and  Lemma \ref{l1L}, yields
   \begin{equation}
 \label{e68m}
 \begin{array}{l}
\ds \left|
\E_{N_{\Q_T}}[\varphi(h(T))\,(\Gamma'_{n}(h+d(x,\cdot))\cdot(d_x(x,\cdot )y)|\Gamma_{n}( h+d(x,\cdot))=r]\,D^+_r\Lambda_{n,x}(r)\right|\\
\\
\hspace{10mm}\ds \le \|\varphi\|_\infty c_{1,T}\,|D^+_r\Lambda_{x}(r)|\le  \|\varphi\|_\infty c_{1,T}\,K_r.
 \end{array}
  \end{equation}
  The conclusion follows.
   \end{proof}

 \section{Main results}

Now we take $\varphi\in B_b(H)$ and   prove a representation formula for $D_xR^{\mathcal O_r}_{T}\varphi(x)$.
\begin{Theorem}
\label{t15}
Assume Hypotheses \ref{h1} and \ref{h2}. Then there exists the gradient
of $R^{\mathcal O_r}_T\varphi$ for all $\varphi\in B_b(\overline{\mathcal O_r})$ and it results
 \begin{equation}
 \label{e100}
 \begin{array}{l}
 D_xR^{\mathcal O_r}_{T}\varphi(x)\cdot y=\ds
 \int_{\{\Gamma(h+d(x,\cdot))\le r\}} \varphi(h(T))\exp\left\{-\tfrac12\,F(x)+G(x,h)\right\} \left(-\tfrac12\,F_x(x)y+G_x(x,h)y\right) \,  N_{\Q_{T }}(dh)\\
\\
\hspace{25mm} +\ds
\E_{N_{\Q_T}}[\varphi(h(T))\,(\Gamma'(h+d(x,\cdot))\cdot(d_x(x,\cdot )y)|\Gamma( h+d(x,\cdot))=r]\,D^+_r\Lambda_{x}(r).
 \end{array}
 \end{equation}
\end{Theorem}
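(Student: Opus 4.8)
The plan is to pass to the limit $n\to\infty$ in identity \eqref{e68h} of Proposition \ref{p13}, to identify the limit with the right--hand side of \eqref{e100} (which we denote $L(x)y$), and then to turn this convergence of gradients into the existence of $D_xR^{\mathcal O_r}_T\varphi$. Throughout, $x\in\overline{\mathcal O_r}$ and $y\in H$ are fixed, and we recall from Proposition \ref{p0} that $R^{\mathcal O_r}_{T,n}\varphi(x)\to R^{\mathcal O_r}_T\varphi(x)$.

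\medskip\noindent\textbf{Step 1 (the first integral).} For $N_{\Q_T}$--almost every $h$ the integrand of the first integral of \eqref{e68h} converges to that of the first integral of \eqref{e100}: indeed $G^{n}(x,h)\to G(x,h)$ by \eqref{e30}, $G^{n}_x(x,h)y\to G_x(x,h)y$ by the same Riemann--sum argument, and $\one_{\{\Gamma_n(h+d(x,\cdot))\le r\}}\downarrow\one_{\{\Gamma(h+d(x,\cdot))\le r\}}$ off $\{\Gamma(h+d(x,\cdot))=r\}$, which is $N_{\Q_T}$--negligible by the absolute continuity \eqref{e601}. The integrand is dominated by an $N_{\Q_T}$--integrable function, namely $e^{c_T|h|_E}$ for the exponential by \eqref{e31b} and $C_{T,r}(1+|h|_E)$ for the remaining factor by \eqref{e68l} and the boundedness of $F_x$, the product being integrable since $N_{\Q_T}$ has all exponential moments. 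Dominated convergence then gives convergence of the first integral of \eqref{e68h} to the first integral of \eqref{e100}.

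\medskip\noindent\textbf{Step 2 (the conditional--expectation term).} By Lemma \ref{l13s}, $D^+_r\Lambda_{n,x}(r)\to D^+_r\Lambda_x(r)$ with the uniform bound $D^+_r\Lambda_{n,x}(r)\le K_r$. By Lemmas \ref{l12} and \ref{l16b} the conditional expectation is the $\epsilon\to0$ limit of the shell averages $\tfrac1{2\epsilon}\int_{\{|\Gamma_n(h+d(x,\cdot))-r|\le\epsilon\}}\varphi(h(T))(\Gamma'_n(h+d(x,\cdot))\cdot(d_x(x,\cdot)y))\,N_{\Q_T}(dh)$, and one passes to the limit $n\to\infty$ in it using three ingredients: $\Gamma_n(\cdot+d(x,\cdot))\uparrow\Gamma(\cdot+d(x,\cdot))$ pointwise on $E$, so the $N_{\Q_T}$--mass of the symmetric difference of the shells at level $n$ and the shell for $\Gamma$ vanishes; the convergence $\Gamma'_n(h+d(x,\cdot))\cdot(d_x(x,\cdot)y)\to\Gamma'(h+d(x,\cdot))\cdot(d_x(x,\cdot)y)$ in $L^2(E,N_{\Q_T})$, together with the uniform--in--$n$ bound of Lemma \ref{l1L}; and, crucially, the uniform density estimate $D^+_r\Lambda_{n,x}\le K_r$ on $[r/2,3r/2]$, which prevents escape of mass and makes the double limit $n\to\infty$, $\epsilon\to0$ interchangeable. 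This is the main obstacle of the proof: the conditioning event $\{\Gamma_n(\cdot+d(x,\cdot))=r\}$ moves with $n$ and $\Gamma'_n$ converges to $\Gamma'$ only in an integral sense, so the passage to the limit cannot be done naively and must be organised around the uniform Ehrhard--type bound of Lemma \ref{l13s}. Once this is done, the second term of \eqref{e68h} converges to the second term of \eqref{e100}, and by \eqref{e68m} the limit $L(x)y$ satisfies the bound \eqref{e68i}.

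\medskip\noindent\textbf{Step 3 (differentiability).} The form $y\mapsto L(x)y$ is linear, and by Steps 1--2, \eqref{e68i} and Fatou's lemma these forms are equibounded for $x$ in any compact $K\subset\overline{\mathcal O_r}$. For $x_0\in\overline{\mathcal O_r}$, $y\in H$ and $t$ small enough that the segment lies in $\overline{\mathcal O_r}$, Lemma \ref{l7} (with the continuity in $x$ of the right--hand side of \eqref{e38a}) gives $R^{\mathcal O_r}_{T,n}\varphi(x_0+ty)-R^{\mathcal O_r}_{T,n}\varphi(x_0)=\int_0^t D_xR^{\mathcal O_r}_{T,n}\varphi(x_0+sy)\cdot y\,ds$. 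Letting $n\to\infty$, the left side tends to $R^{\mathcal O_r}_T\varphi(x_0+ty)-R^{\mathcal O_r}_T\varphi(x_0)$ by Proposition \ref{p0} and the right side to $\int_0^t L(x_0+sy)y\,ds$ by dominated convergence, using the pointwise convergence of Steps 1--2 and the equiboundedness on compacts. Since $x\mapsto L(x)$ is continuous --- each term of \eqref{e100} depending continuously on $x$ through $d(x,\cdot)$, $F$, $F_x$, $G$, $G_x$ and through $D^+_r\Lambda_x(r)$, the latter because the Helly selection in the proof of Lemma \ref{l13s} is uniform over the compact $\overline{\mathcal O_r}$ --- we may differentiate the integral identity at $t=0$ and obtain $D_xR^{\mathcal O_r}_T\varphi(x_0)\cdot y=L(x_0)y$, that is \eqref{e100}. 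The identity holds for every $\varphi\in B_b(\overline{\mathcal O_r})$ because \eqref{e68h} does, by Lemma \ref{l7} and Proposition \ref{p13}; consistently, $R^{\mathcal O_r}_T\varphi$ is already known to be continuous by the strong Feller property, Proposition \ref{p1}.
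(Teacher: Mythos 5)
Your overall strategy coincides with the paper's: pass to the limit $n\to\infty$ term by term in \eqref{e68h} and then upgrade convergence of the gradients to differentiability through the segment identity $R^{\mathcal O_r}_{T,n}\varphi(x)-R^{\mathcal O_r}_{T,n}\varphi(x_0)=\int_0^1 D_xR^{\mathcal O_r}_{T,n}\varphi(\alpha x+(1-\alpha)x_0)\cdot(x-x_0)\,d\alpha$. Your Step 1 and Step 3 are essentially the paper's Steps 1 and 3. The genuine gap is in your Step 2, precisely at the point you yourself call the main obstacle. Having Lemma \ref{l16b}, the term to be controlled is already, for each $n$, of the form $\E_{N_{\Q_T}}[\Psi_n\,|\,\Gamma_n(h+d(x,\cdot))=r]\,D^+_r\Lambda_{n,x}(r)$; what remains is a pure $n\to\infty$ limit of this product. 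You instead go back to the $\epsilon$--shell averages and assert that the double limit $n\to\infty$, $\epsilon\to 0$ may be interchanged ``because of the uniform density estimate $D^+_r\Lambda_{n,x}\le K_r$''. That bound only controls the mass of the shells $\{|\Gamma_n(h+d(x,\cdot))-r|\le\epsilon\}$ by $2\epsilon K_r$; it gives no uniformity in $n$ of the $\epsilon\to 0$ convergence of the shell averages, nor any identification of the exchanged limit with $\E_{N_{\Q_T}}[\Psi\,|\,\Gamma(h+d(x,\cdot))=r]\,D^+_r\Lambda_x(r)$, so the interchange is asserted, not proved. The paper closes exactly this point by a different mechanism: it shows $\Psi_n\to\Psi$ in $L^1(E,N_{\Q_T})$ (via the $W^{1,2}$ bound of Lemma \ref{l1L} and dominated convergence), splits $|\E[\Psi_n|\Gamma_n=r]-\E[\Psi|\Gamma=r]|\le J_1(n)+J_2(n)$, and invokes the convergence theorem for conditional expectations under monotone convergence of the conditioning variables, \cite[10.1.7]{Du02}, together with \eqref{e60l} for the densities. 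Some ingredient of this kind (or a genuine equicontinuity/uniform-in-$n$ statement about the shell averages) is needed; without it your Step 2 does not go through.

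Two smaller deviations are worth noting. First, in Step 3 you need continuity of $x\mapsto L(x)$ (in particular of $D^+_r\Lambda_x(r)$) to differentiate the limiting integral identity; your appeal to a ``uniform Helly selection'' is not an argument (the paper is also terse here, so this is not a differential defect, but be aware it is unproved in your text as well). Second, you dispose of the case $\varphi\in B_b(\overline{\mathcal O_r})$ by simply saying that \eqref{e68h} holds for such $\varphi$; the paper is more careful: it carries out the limiting argument for continuous $\varphi$ and then recovers the general bounded Borel case through the strong Feller property and the semigroup law, writing $R^{\mathcal O_r}_{T}=R^{\mathcal O_r}_{T/2}R^{\mathcal O_r}_{T/2}$ with $R^{\mathcal O_r}_{T/2}\varphi\in C_b$ by Proposition \ref{p1}. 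You should include this reduction rather than relying on the $B_b$ form of the approximating identity alone.
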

\begin{proof}

We recall that by Proposition \ref{p13} we have
$$
D_xR^{\mathcal O_r}_{T,n}\varphi(x)\cdot y
= I(n,x,y)+J(n,x,y),
$$
where
  \begin{equation}
 \label{e65b}
I(n,x,y)= \int_{\{\Gamma_{n}(h+d(x,\cdot))\le r\}} \varphi(h(T))\exp\left\{-\tfrac12\,F(x)+G^{\,n}(x,h)\right\} \left(-\tfrac12\,F_x(x)y+G_x^{\,n}(x,h)y\right) \,  N_{\Q_{T }}(dh)
 \end{equation}
 and
  \begin{equation}
 \label{e66}
J(n,x,y)=
\E_{N_{\Q_T}}[\varphi(h(T))\,\Gamma_{n}(h+d(x,\cdot))\cdot(d_x(x,\cdot )y)|\Gamma_{n}( h+d(x,\cdot))=r]\,D^+_r\Lambda_{n,x}(r).
 \end{equation}
 {\bf Step 1}. Convergence of  $I(n,x,y)$ as $n\to\infty$.

 For all $x\in \overline{\mathcal O_r}$ and all  $y\in H$ we have
 \begin{equation}
 \label{e68a}
\lim_{n\to \infty}I(n,x,y)= \int_{\{\Gamma(h+d(x,\cdot))\le r\}} \varphi(h(T))\exp\left\{-\tfrac12\,F(x)+G(x,h)\right\} \left(-\tfrac12\,F_x(x)y+G_x(x,h)y\right) \,  N_{\Q_{T }}(dh)
 \end{equation}\medskip
 
 This follows   by the dominated convergence theorem arguing as in the proof of Proposition \ref{p0}.\bigskip   
 
 \noindent{\bf Step 2}.  Convergence of  $J(n,x,y)$ as $n\to\infty$. Let  $r>0$, $x\in\overline{\mathcal O_r}, y\in H$. Then,   we have, 
\begin{equation}
\label{e69a}
\lim_{n\to \infty}\,J(n,x,y)=-
\E[\varphi(h(T))\,(\Gamma'(h+d(x,\cdot))\cdot(d_x(x,\cdot )y)|\Gamma(h+d(x,\cdot))=r]\,D_r\Lambda_x(r).
\end{equation}
First we notice
  that $\Gamma_n(h+d(x,\cdot))$ converges uniformly to $\Gamma(h+d(x,\cdot))$ for any $x$.
Moreover, since  the function $h\to \sup_{t_j} h(t_j)$ is Lipschitz continuous in $E$ and $g$   fulfills Hypothesis \ref{h2}(ii), it follows that $\Gamma_n(h+d(x,\cdot))$  belongs to a bounded subset of $W^{1,2}(E,N_{\Q_T})$, by  Lemma \ref{l1L}.
So, a subsequence of $(\Gamma'_n(h+d(x,\cdot))$ (which we still denote by $(\Gamma'_n(h+d(x,\cdot))$) converges to $\Gamma'(h+d(x,\cdot))$ in $L^1(E,N_{\Q_{T}})$

Now we start from \eqref{e66} which we write as
 $$
  \begin{array}{l}
 \ds J(n,x,y)= \E[ \Psi_n(h)|\Gamma_{n}( h+d(x,\cdot))=r]\,D_r\Lambda_{n,x}(r),
 \end{array}
 $$
where 
 \begin{equation}
\Psi_n(h)=-
\varphi(h(T))(\Gamma'_{n}( h+d(x,\cdot))\cdot (d_x(x,\cdot )y).
\end{equation}
  Note that $D_r\Lambda_{n,x}(r)\to D_r\Lambda_{x}(r)$ as $n\to\infty$ by
 Lemma \ref{l13s}.
 
By Hypothesis \ref{h2}(ii) we have
$$|\Psi_n(h)|\le
 \|\varphi\|_\infty (a+e^{b|x|_{E}}),\quad \forall\,h\in E, $$
so that, there exists $M>0$ such that
$ |\Psi_n(h)|_{L^1(E,N_{\Q})} \le M,\, \forall\,n\in\N.$
 Also
$\Psi_n(h)\to \Psi(h)$ for  all $h\in E$ by Lemma \ref{l12}(iii).
Therefore $\Psi_n\to \Psi$ in $L^1(E,N_{\Q})$ by the dominated convergence theorem.

Now we can   show that
\begin{equation}
\label{e65}
 \lim_{n\to \infty}\E[\Psi_n|\Gamma_{n}(h+d(x,\cdot))=r]=\E[\Psi|\Gamma(h+d(x,\cdot))=r].
\end{equation}
To this aim write
$$
\begin{array}{l}
\big|\E[\Psi_n|\Gamma_{n}(h+d(x,\cdot))=r]-\E[\Psi|\Gamma(h+d(x,\cdot))=r]\big|\\
\\
\le\big|\E[\Psi_n-\Psi|\Gamma_{n}(h+d(x,\cdot))=r]\big| +\big|\E[\Psi|\Gamma_{n}(h+d(x,\cdot))=r]-\E[\Psi|\Gamma(h+d(x,\cdot))=r]\big|\\
\\
:=J_1(n)+J_2(n).
\end{array}
$$
Since $\Psi_n\to \Psi$ in $L^1(E,N_{\Q})$
we have
\begin{equation}
\label{e65e}
|J_1(n)|\to 0\quad \mbox{\rm in}\, L^1(E,N_{\Q})\qquad \mbox{as $n\to\infty$.}
\end{equation}
For dealing with $J_2(n)$, note that 
\begin{equation}
\label{e65ee}
\lim_{n\to\infty}\E[\Psi|\Gamma_{n}(h+d(x,\cdot))=r]=\E[\Psi|\Gamma(h+d(x,\cdot))=r]
\end{equation}
because $\Gamma_{n}$ is  decreasing to $\Gamma$,
see e.g. \cite[10.1.7]{Du02}.
Now  step 2    follows from \eqref{e65e}
and \eqref{e65ee}.

\medskip
 
 \noindent{\bf Step 3}. Esistence of $D_xR^\mathcal {O_r}_{T}\varphi$, for all  $\varphi\in C_b(\overline{\mathcal O_r})$.
 
 \medskip

\noindent Let us recall that by   Proposition \ref{p0} and  Steps 1,2  we know   that\medskip

(i)\; there exists the limit
$$\lim_{n\to \infty}R^{\mathcal O_r}_{T,n}\varphi(x)=R^{\mathcal O_r}_{T}\varphi(x),\quad\,\forall\,\;x\in \overline{\mathcal O_r} .$$

 (ii)\; there exists the limit
 $$
\lim_{n\to \infty}D_xR^{\mathcal O_r}_{T,n}\varphi(x)\cdot y=:\Xi(x)\cdot y \quad\forall\,x\in \overline{\mathcal O_r},\; y\in H.
$$
\indent (iii) There exists $M_{\|\varphi\|_\infty}>0$ such that
\begin{equation}
\label{e1000}
|R^{\mathcal O_r}_{T,n}\varphi(x)|+|D_xR^{\mathcal O_r}_{T,n}\varphi(x)|\le M_{\|\varphi\|_\infty},\quad\forall\,x\in \overline{\mathcal O_r}.
\end{equation}
Let now $x,x_0\in \overline{\mathcal O_r}.$ Since
$$
R^{\mathcal O_r}_{T,n}\varphi(x)-R^{\mathcal O_r}_{T,n}\varphi(x_0)=\int_0^1(D_xR^{\mathcal O_r}_{T,n}\varphi)(\alpha x+(1-\alpha)x_0)\cdot (x-x_0)\,d\alpha.
$$
 Letting $n\to \infty$ we obtain, by the dominated convergence theorem
 $$
R^{\mathcal O_r}_{T}\varphi(x)-R^{\mathcal O_r}_{T}\varphi(x_0)=\int_0^1\Xi(\alpha x+(1-\alpha)x_0)\cdot (x-x_0)\,d\alpha.
$$
This implies that $R^{\mathcal O_r}_{T}\varphi(x)$ is differentiable at $x$ in the direction $y$ and
$$
DR^{\mathcal O_r}_{T}\varphi(x)\cdot y=\Psi(x)\cdot y.
$$\medskip

\noindent\ {\bf Step 4}.$\varphi\in B_b(\overline{\mathcal O_r})$\bigskip

\noindent Since $R^\mathcal O_{T}$ is strong Feller (Proposition  \ref{p1}), we have
$R^\mathcal O_{T/2}\in C_b(H)$, so, the conclusion follows starting from $T/2$.

The proof is complete.
 
 \end{proof}\medskip
 


\begin{Example}
\label{ex16}
\em
 We continue here Example \ref{ex1}
Let $
A=\left(
\begin{array}{cc}
0&0\\
1&0
\end{array} \right),\;
C=\left(
\begin{array}{cc}
1&0\\
0&0
\end{array} \right).
$
Then we have $
e^{tA}=\left(
\begin{array}{cc}
1&0\\
t&1
\end{array} \right),\quad  e^{tA^*}=\left(
\begin{array}{cc}
1&t\\
0&1
\end{array} \right)$
We  have seen that Hypothesis \ref{h1} is fulfilled.
Let $U$ as in Lemma \ref{l5b}
\begin{equation}
U=  \int_{0}^Tr e^{rA}Ce^{rA^*}dr=\int_0^T\left(
\begin{array}{cc}
r&r^2\\
r^2&r^3
\end{array} \right)ds=\frac1{12}
\left(\begin{array}{cc}
6T^2&4T^3\\
4T^3&3T^4
\end{array}\right)
\end{equation}
so that det $U>0$ and
$$
 U^{-1}=\frac6{T^2}\,\left(\begin{array}{cc}
3&-\frac{4}{T}\\
-\frac{4}{T}&\frac{6}{T^2}
\end{array}\right)
 $$
  Moreover, by Proposition \ref{p5c} we have
$$
u(x,s)= \frac6{T^4}\,\begin{pmatrix}
T^2-2Ts&& 2(T-3s)\\
2T &&6
\end{pmatrix}\begin{pmatrix} x_1\\x_2\end{pmatrix}
,\quad x\in H,\;s\in[0,T].
$$
So,
\begin{equation}
\label{e77}
|u(x,s)|_H\le T^{-4}C_1(T)|x|_H
\end{equation}
and
\begin{equation}
\label{e78}
|K(t,s)|_H\le C_2(T),\quad \forall\,t,s\in[0,T],
\end{equation}
where $C_1(T), C_2(T)$ are continuous in $(0,+\infty)$. Moreover
\begin{equation}
\label{e79}
|a(t,x)|_H=|\Q_T\,u(t,x)|_H\le T^{-3}C_1(T)|x|_H
\end{equation}
Finally, $G(x,h)=\langle u(t,x),h\rangle_H\le |h|_X\, |u(t,x)|$.

Concerning Hypothesis \ref{h2}, assume that $g(x)=|x|^2$. Then
$$
\Lambda(x,r)=\int_{\|h+d(x,\cdot)\|_E\le r}N_{\Q_T}(dh)
$$
Note that
by  \eqref{e60}  we know that  $D_r\Lambda(x,r)$ is uniformly bounded in $x$.
\end{Example}

\newpage

\footnotesize


\begin{thebibliography}{99}
 
 \bibitem[AiMa88]{AiMa88}
   H. Airault and  P. Malliavin.  \textit{Int\'egration g\'eom\'etrique sur l'espace de Wiener},  Bull. Sci. Math. {\bf 112},  3--52, 1988.
   
   
  \bibitem[Bo98]{Bo98}  V.I. Bogachev,  {\it Gaussian Measures}, American Mathematical Society, Providence, 1998.
  
  \bibitem[Ce01]{Ce01}  S. Cerrai, {\it Second order PDE's in finite and infinite dimension. A probabilistic approach}. Lecture Notes in Mathematics, 1762. Springer-Verlag, Berlin, 2001.
   
   \bibitem[BoDaTu18]{BoDaTu18}  S. Bonaccorsi, G. Da Prato and L. Tubaro, {\it Construction of a surface integral under local Malliavin assumptions, and related integration by parts formulas.} J. Evol. Equ. , no. 2, 871--897,  2018.

    

\bibitem[DaLuTu14]{DaLuTu14} G. Da Prato, A. Lunardi and  L. Tubaro,  {\it Surface measures in infinite dimension},   Rend. Lincei Mat. Appl. {\bf 25},   309--330, 2014.


 
  




 


   
  
 \bibitem[DaZa14]{DaZa14}  G. Da Prato and J. Zabczyk,  \textit{Stochastic equations in infinite 
dimensions,}
 Encyclopedia of Mathematics and its Applications,  Cambridge University
Press,  second edition, 2014.



 

\bibitem[Du02]{Du02}  R. M.  Dudley, {\it Real Analysis and Probability}, Cambridge studies in advanced mathematics, 2002.




\bibitem[KoFo70]{KoFo70}   A.N. Kolmogorov and S.V. Fomin, {\it Introductory real analysis}. Dover, New
York, 1970.

\bibitem[Li86]{Li86} W. Linde, {\it Gaussian measure of translated nballs in Banach spaces}, Theory Probab. Appl. {\bf 34}, no.2, 1986.

\bibitem[Nu06] {Nu06} D. Nualart,  {\it The Malliavin calculus and related topics}. Probability and its Applications, Springer-Verlag, 1995. Second Edition, Springer-Verlag, 2006.


\bibitem [Ph78]{Ph78} R. R. Phelps,   Gaussian null sets and differentiability of
Lipschitz map on Banach spaces, {\it Pac. J. Math.}, {\bf 77}, 523-531, 1978.


\bibitem[Ta00]{Ta00}  A. Talarczyk,   {\it Dirichlet problem for parabolic equations on
Hilbert spaces},   Studia Math.,  {\bf 141}, 109-142, 2000.


 

\bibitem[Za92]{Za92} J.Zabczyk, {\it Mathematical Control Theory: An Introduction},
Birkh\"auser, 1992.




\end{thebibliography}
\end {document}